\newtheorem{thm}{Theorem}
\newtheorem{lemma}[thm]{Lemma}
\newtheorem{prop}[thm]{Proposition}
\newtheorem{coro}[thm]{Corollary}
\newtheorem{remark}[thm]{Remark}
\DeclarePairedDelimiter\norm{\lVert}{\rVert}
\def\supp{\text{supp}\,}
\def\p{\partial}
\def\R{\mathbb R}
\title{A Linearized
Boundary Control Method for the Acoustic Inverse Boundary Value Problem}
\author[1]{Lauri Oksanen \thanks{lauri.oksanen@helsinki.fi}}
\author[2]{Tianyu Yang
\thanks{yangti27@msu.edu}}
\author[3]{Yang Yang
\thanks{yangy5@msu.edu}}
\affil[1]{Department of Mathematics and Statistics, University of Helsinki}
\affil[2]{Department of Computational Mathematics, Science and Engineering, Michigan State University}
\affil[3]{Department of Computational Mathematics, Science and Engineering, Michigan State University}
\date{}
\begin{document}

\maketitle

\abstract{We develop a linearized boundary control method for the inverse boundary value problem of determining a potential in the acoustic wave equation from the Neumann-to-Dirichlet map.
When the linearization is at the zero potential, we derive a reconstruction formula based on the boundary control method and prove that it is of Lipschitz-type stability.
When the linearization is at a nonzero potential, we prove that the problem is of H\"{o}lder-type stability. 
The proposed reconstruction formula is implemented and evaluated using several numerical experiments to validate its feasibility.
}

\section{Introduction}

The paper is concerned with the linearized inverse boundary value problem (IBVP) for the acoustic wave equation with a potential. The goal is to derive stability estimates and reconstruction procedures to numerically compute a small perturbation of a known underlying potential, given the Neumann-to-Dirichlet map on the boundary of the domain.

\textbf{Formulation.}
Specifically, let $T>0$ be a constant and $\Omega\subset\mathbb{R}^n$ be a bounded open subset with smooth boundary $\partial\Omega$. 
Consider the following boundary value problem for the acoustic wave equation with potential: 
\begin{equation} \label{eq:bvp}
\left\{
\begin{array}{rcl}
\Box_{c,q} u(t,x) & = & 0, \quad\quad\quad \text{ in } (0,2T) \times \Omega \\
 \partial_\nu u  & = & f, \quad\quad\quad \text{ on } (0,2T) \times \partial\Omega \\ 
 u(0,x) = \partial_t u(0,x) & = & 0 \quad\quad\quad\quad x \in \Omega.
\end{array}
\right.
\end{equation}
Here $\Box_{c,q}$ is a linear wave operator defined as
$$
\Box_{c,q} u (t,x) :=    \partial^2_t u(t,x) - c^2(x) \Delta u(t,x) + q(x) u(t,x);
$$
$c(x) \in C^\infty(\overline{\Omega})$ is a smooth wave speed that is strictly positive, $q(x)\in L^\infty(\Omega)$ is a real-valued function. We denote this solution by $u^f(t,x)$ when it is necessary to specify the Neumann data.

Given $f\in C^\infty_c((0,2T)\times\partial\Omega)$, the well-posedness of this problem is ensured by the standard theory for second order hyperbolic partial differential equations~\cite{evans1998partial}.
As a result, the following Neumann-to-Dirichlet (ND) map is well defined:
\begin{equation} \label{eq:NDmap}
\Lambda_{c,q} f := u^f|_{(0,2T)\times \partial\Omega}.
\end{equation}
%\HOX{Q: Is this mapping property of $\Lambda_q$ correct?}
% $\Lambda_{c,q}$ is clearly a linear operator. moreover, it has the mapping property that (REFERENCE?) $\Lambda_q: L^2((0,2T)\times\partial\Omega) \rightarrow H_{cc}^\frac{2}{3}((0,2T)\times\partial\Omega)$ where
% $$
% H_{cc}^\frac{2}{3}((0,2T)\times\partial\Omega) := \{f\in H^\frac{2}{3}((0,2T)\times\partial\Omega): f(0,x)=0\}.
% $$
The inverse boundary value problem (IBVP) concerns recovery of the wave speed $c(x)$ and/or the potential $q(x)$ from knowledge of the ND map $\Lambda_{c,q}$, that is to invert the nonlinear map $(c,q)\mapsto \Lambda_{c,q}$.

\textbf{Literature.} The IBVP has been studied in the mathematical literature for a long time. 
The first coefficient determination result for a multidimensional wave equation, given the ND map, is by Rakesh and Symes~\cite{rakesh1988uniqueness}. They proved that a spatially varying potential is uniquely determined by the ND map in the case that $c= 1$. To our knowledge their method has never been implemented computationally.   
%\HOX{Removed sentence with ref to Ramm's work. There is a priority conflict on the idea in this work and a paper with Plamen.}

For variable $c$ and $q= 0$, Belishev~\cite{belishev1988approach} proved that the leading order coefficient $c$ is uniquely determined using the boundary control (BC) method combined with Tataru's unique continuation result~\cite{tataru1995unique}. The method has since been extended to 
many wave equations.
We mention~\cite{belishev1992reconstruction} for a generalization to Riemannian manifolds, and~\cite{MR1331288} for a result covering all symmetric time-independent lower order perturbations of the wave operator. 
Non-symmetric, time-dependent and matrix-valued lower order perturbations were recovered in~\cite{eskin2007inverse} \cite{MR1784415}, and~\cite{MR3880231}, respectively. For a review of the method, we refer to~\cite{MR2353313, MR1889089}.

The BC method has been implemented numerically to reconstruct the wave speed in~\cite{belishev1999dynamical}, and subsequently in~\cite{belishev2016numerical, de2018recovery, pestov2010numerical, yang2021stable}. 
The implementations~\cite{belishev1999dynamical, belishev2016numerical, de2018recovery} involve solving unstable control problems, whereas~\cite{pestov2010numerical, yang2021stable} are based on solving stable control problems but with target functions exhibiting exponential growth or decay. The exponential behaviour leads to instability as well. On the other hand, the linearized approach introduced in the present paper is stable. 
It should be mentioned that the BC method can be implemented in a stable way in the one-dimensional case, see~\cite{korpela2018discrete}.
For an interesting application of a variant of the method in the one-dimensional case, see~\cite{blaasten2019blockage} on detection of blockage in networks.

Under suitable geometric assumptions, it can be proven that the problem to recover the speed of sound is H\"older stable \cite{stefanov1998stability, stefanov2005stable}, even when the speed is given by an anisotropic Riemannian metric. Moreover, a low-pass version of $c$ can be recovered in a Lipschitz stable manner~\cite{liu2016lipschitz}.
The problem to recover $q$ is H\"older stable assuming, again, that the geometry is nice enough \cite{bellassoued2010stability, montalto2014stable, sun1990continuous}.
To our knowledge, the method, based on using high frequency solutions to the wave equation and yielding the latter three results, has not been implemented computationally. Stability results applicable to general geometries have been proven using the BC method in~\cite{MR2096795}, with an abstract modulus continuity, and very recently in~\cite{bosi2019reconstruction, burago2021stability}, with a doubly logarithmic modulus of continuity.

Finally, let us mention that recently there has been a lot of activity related to recovery of time-dependent coefficients in wave equations, see for example
\cite{stefanov2018lorentzian}, \cite{alexakis2021lorentzian}, and the references therein for up-to-date results. The first result in the time-dependent context was obtained in \cite{stefanov1989uniqueness}.

\textbf{Linearization.} \textit{Throughout the paper, we will assume the wave speed $c=c_0(x)$ is known}. Here $c_0(x) \in C^\infty(\overline{\Omega})$ is a smooth wave speed that is strictly positive. Then the IBVP only concerns recovery of the potential $q$.

We will study the linearization of this problem by linearzing the map $q\mapsto \Lambda_{c_0,q}$. For this purpose, we write
$$
q(x) = q_0(x) + \epsilon\dot{q}(x), \quad\quad u(t,x) = u_0(t,x) + \epsilon \dot{u}(t,x)
$$
where $q_0$ is a known background potential and $u_0$ is the background solution.
Insert these into~\eqref{eq:bvp}. Equating the $O(1)$-terms gives
\begin{equation} \label{eq:unperturbedBVP}
\left\{
\begin{array}{rcll}
\Box_{c_0,q_0} u_0(t,x) & = & 0, &\quad \text{ in } (0,2T) \times \Omega \\
 \partial_\nu u_0  & = & f, &\quad \text{ on } (0,2T) \times \partial\Omega \\ 
 u_0(0,x) = \partial_t u_0(0,x) & = & 0, &\quad x \in \Omega.
\end{array}
\right.
\end{equation}
Equating the $O(\epsilon)$-terms gives
\begin{equation} \label{eq:linearizedBVP}
\left\{
\begin{array}{rcll}
\Box_{c_0,q_0} \dot{u}(t,x) & = & u_0(t,x) \dot{q}(x), &\quad \text{ in } (0,2T) \times \Omega \\
 \partial_\nu \dot{u}  & = & 0, &\quad \text{ on } (0,2T) \times \partial\Omega \\ 
 \dot{u}(0,x) = \partial_t \dot{u}(0,x) & = & 0 &\quad x \in \Omega.
\end{array}
\right.
\end{equation}
Write the ND map $\Lambda_{q}=\Lambda_{q_0} + \epsilon \dot{\Lambda}_{\dot{q}}$, where $\Lambda_{q_0}$ is the ND map for the unperturbed boundary value problem~\eqref{eq:unperturbedBVP}, and $\dot{\Lambda}_{\dot{q}}$ is defined as
\begin{equation} \label{eq:linearizedNDmap}
\dot{\Lambda}_{\dot{q}}: f \mapsto \dot{u}|_{(0,2T)\times\partial\Omega}.
\end{equation}
Note that the unperturbed problem~\eqref{eq:unperturbedBVP} can be explicitly solved to obtain $u_0$ and $\Lambda_{q_0}$, since $c_0$ and $q_0$ are known. As before, we will write $\dot{u} = \dot{u}^f := \dot{\Lambda}_{\dot{q}} f$ if it is necessary to specify the Neumann data $f$.
Then the linearized IBVP concerns recovery of the potential $\dot{q}$ from $\dot{\Lambda}_{\dot{q}}$.

\textbf{Main Results.} The main contribution of this paper consists of several results regarding reconstruction of $\dot{q}$ based on the BC method. For constant $c_0$ and $q_0= 0$, we derive a reconstruction formula for $\dot{q}$ in Theorem~\ref{thm:recon1}, and prove that the reconstruction is of Lipschitz-type stability in Theorem~\ref{thm:stab1}.
For constant $c_0$ and $q_0\neq 0$, we derive a reconstruction formula in Theorem~\ref{thm:recon2}, and prove that the reconstruction is of H\"{o}lder-type stability in Theorem~\ref{thm:stab2}.
The formula is implemented and validated using numerical experiments, showing reliable reconstructions.

\textbf{Paper Structure.} The paper is organized as follows. Section~\ref{sec:prelim} reviews a few fundamental concepts and results in the boundary control theory. Section~\ref{sec:idandcontrol} consists of an integral identity and a controllability result that are essential to the development of our method. Section~\ref{sec:stabandrecon} establishes stability estimates and reconstruction formulae for the linearized IBVP, which are the central results of the paper. Section~\ref{sec:experiment} is devoted to numerical implementation of the reconstruction formulae as well as numerical experiments to assess performance of the proposed reconstructions.

%\HOX{Q: Shall we prove that the involved nonlinear functionals are Frech\'{e}t differentiable?}

%\textbf{Remark:} An equivalent measurement to the ND map is the correponding bilinear form.

\section{Preliminaries} \label{sec:prelim}

Introduce some notations: 
Given a function $u(t,x)$, we write $u(t)=u(t,\cdot)$ for the spatial part as a function of $x$.
Introduce the time reversal operator $R: L^2([0,T]\times\partial\Omega) \rightarrow L^2([0,T]\times\partial\Omega)$,
\begin{equation} \label{eq:R}
Ru(t,\cdot):=u(T-t,\cdot) ,\quad\quad 0<t<T;
\end{equation}
and the low-pass filter $J: L^2([0,2T]\times\partial\Omega) \rightarrow L^2([0,T]\times\partial\Omega)$
\begin{equation} \label{eq:J}
Jf(t,\cdot):=\frac{1}{2}\int^{2T-t}_t f(\tau,\cdot) \,d\tau,\quad\quad 0<t<T.
\end{equation}
We write $P_T: L^2((0,2T)\times\partial\Omega) \rightarrow L^2((0,T)\times\partial\Omega)$
for the orthogonal projection via restriction. Its adjoint operator $P^\ast_T: L^2((0,T)\times\partial\Omega) \rightarrow L^2((0,2T)\times\partial\Omega)$
is the extension by zero from $(0,T)$ to $(0,2T)$. Let $\mathcal{T}_D$ and $\mathcal{T}_N$ be the Dirichlet and Neumann trace operators respectively, that is,
$$
\mathcal{T}_D u(t,\cdot) = u(t,\cdot)|_{\partial\Omega}, \quad\quad\quad \mathcal{T}_N u(t,\cdot) = \partial_\nu u(t,\cdot)|_{\partial\Omega}.
$$

Introduce the \textit{connecting operator}
\begin{equation} \label{eq:K}
K:= J \Lambda_{q} P^\ast_T - R \Lambda_{q,T} R J P^\ast_T
\end{equation}
where $\Lambda_{q,T}f := P_T(\Lambda_{q} f)$.
Then the following Blagove\u{s}\u{c}enski\u{ı}’s identity holds~\cite{bingham2008iterative,blagoveshchenskii1967inverse,de2018exact,oksanen2013solving}. 
%\HOX{Q: Do we really need this? It seems all we need is the linearized version.}

\bigskip
\begin{prop} \label{thm:waveinner}
Let $u^f, u^h$ be the solutions of \eqref{eq:bvp} with Neumann traces $f, h \in L^2((0,T)\times\partial\Omega)$, respectively. Then
\begin{equation} \label{eq:ufuh}
(u^f(T), u^h(T))_{L^2(\Omega,c_0^{-2}dx)} = (f,Kh)_{L^2((0,T)\times\partial\Omega)} = (Kf,h)_{L^2((0,T)\times\partial\Omega)}.
\end{equation}
%In particular if $h=f$, one has
%\begin{equation} \label{eq:ufnorm}
%\|u^f(T)\|^2_{L^2(\Omega,c^{-2}dx)} = (f,Kf)_{L^2((0,T)\times\partial\Omega)} = (Kf,f)_{L^2((0,T)\times\partial\Omega)}.
%\end{equation}
\end{prop}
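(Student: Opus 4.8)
The plan is to study the mixed quantity $w(t,s) := (u^f(t), u^h(s))_{L^2(\Omega, c_0^{-2}dx)}$ as a function of the two time variables $(t,s) \in (0,2T)^2$ and to show that it solves a one-dimensional inhomogeneous wave equation whose source is expressible through the boundary data and the ND map. Differentiating under the integral sign and using $\partial_t^2 u^f = c_0^2\Delta u^f - q u^f$ (and likewise for $u^h$), the potential terms cancel in $(\partial_t^2 - \partial_s^2)w$, while Green's formula turns the Laplacian terms into a boundary integral. Using $\partial_\nu u^f = f$, $u^f|_{\partial\Omega} = \Lambda_q f$ (and similarly for $h$), I get
\begin{equation*}
(\partial_t^2 - \partial_s^2) w(t,s) = \int_{\partial\Omega}\big[f(t)\,\Lambda_q h(s) - \Lambda_q f(t)\, h(s)\big]\,dS =: F(t,s),
\end{equation*}
while the zero initial conditions give $w(0,s) = \partial_t w(0,s) = 0$.

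First I would solve this for $w(T,T)$ by d'Alembert's formula: the backward light cone from $(T,T)$ is the triangle $\{0\le\tau\le T,\ \tau\le\sigma\le 2T-\tau\}$, which meets the axis $s=0$ only at the origin, so the data on $s=0$ play no role, and $w(T,T) = \tfrac12\int_0^T\!\int_\tau^{2T-\tau}F(\tau,\sigma)\,d\sigma\,d\tau$. Splitting $F$ into its two terms and recognizing the inner $\sigma$-integral as the filter $J$ applied to $\Lambda_q P_T^* h$, the first term becomes exactly $(f,\ J\Lambda_q P_T^* h)_{L^2((0,T)\times\partial\Omega)}$. For the second term, since $h$ is supported in $(0,T)$ the $\sigma$-integral truncates at $T$ and the inner integral equals $2\,(JP_T^* h)(\tau)$, which combined with the prefactor $\tfrac12$ yields $(\Lambda_{q,T}f,\ JP_T^* h)_{L^2((0,T)\times\partial\Omega)}$.

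The key remaining step — and the main obstacle — is to move $\Lambda_{q,T}$ off $f$ so as to recover the second term $R\Lambda_{q,T}RJP_T^*$ of the connecting operator $K$; this amounts to the time-reversal symmetry $\Lambda_{q,T}^* = R\Lambda_{q,T}R$. I would prove it in two stages. On the full interval, a spacetime Green identity between $u^f$ (zero data at $t=0$) and the backward solution with Neumann data $g$ and zero data at $t=2T$, combined with the invariance of the wave equation under $t\mapsto 2T-t$, yields $\Lambda_q^* = \widehat R\,\Lambda_q\,\widehat R$, where $\widehat R$ is time reversal about $t=T$ on $(0,2T)$. To pass to the truncated operator I would use two structural facts: causality (finite speed of propagation / forward Duhamel evolution), which makes $\Lambda_q$ lower triangular with respect to the splitting $(0,2T)=(0,T)\cup(T,2T)$ and identifies its $(1,1)$-block with $\Lambda_{q,T}$; and time-translation invariance of the equation (as $q$ is time independent), which identifies the $(2,2)$-block with the $(0,T)$-block conjugated by translation by $T$. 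Writing $\widehat R$ in block form as a composition of the within-interval reversal $R$ with that translation, these reductions collapse $\Lambda_q^*=\widehat R\,\Lambda_q\,\widehat R$ precisely to $\Lambda_{q,T}^*=R\Lambda_{q,T}R$. Substituting then gives the second term $-(f,\ R\Lambda_{q,T}RJP_T^* h)$, so $w(T,T)=(f,Kh)$; the second equality $(Kf,h)$ follows for free from the symmetry $w(t,s)=(u^f(t),u^h(s))=(u^h(s),u^f(t))$ under $f\leftrightarrow h$ evaluated at $(t,s)=(T,T)$.

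Throughout I would first carry out these computations for smooth, compactly supported $f,h$, where differentiation under the integral and the trace manipulations are justified, and then extend to $f,h\in L^2((0,T)\times\partial\Omega)$ by density, using the sharp (hidden) trace regularity that makes $f\mapsto u^f(T)$ bounded into $L^2(\Omega)$ and $\Lambda_q$ bounded on $L^2((0,2T)\times\partial\Omega)$. I expect the delicate part to be exactly the bookkeeping in the second stage: tracking reversal about $t=T$ versus about $t=T/2$ together with the translation relating the two diagonal blocks, and verifying that causality legitimately discards the off-diagonal contributions.
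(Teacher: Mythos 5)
Your proposal follows essentially the same route as the paper: both define the two-time inner product $I(t,s)=(u^f(t),u^h(s))_{L^2(\Omega,c_0^{-2}dx)}$, derive the inhomogeneous $1$D wave equation for it via Green's identity, solve for $I(T,T)$ by d'Alembert, identify the inner integrals with the filter $J$, and invoke the symmetry $\Lambda_{q,T}^*=R\Lambda_{q,T}R$ before concluding by density. The only difference is that you sketch a proof of that adjoint/time-reversal identity (via causality and time-translation invariance), which the paper simply asserts as a known relation; your sketch is sound and the rest matches.
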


\begin{proof}
We first prove this for $f, h \in C^\infty_c((0,T)\times\partial\Omega)$.
Define 
$$
I(t,s) := (u^f(t), u^h(s))_{L^2(\Omega,c_0^{-2}dx)}.
$$
We compute
\begin{align}
 & (\partial^2_t - \partial^2_s) I(t,s) \nonumber \\
= & ((\Delta+q c_0^{-2}) u^f(t), u^h(s))_{L^2(\Omega)} - (u^f(t),(\Delta+q c_0^{-2}) u^h(s))_{L^2(\Omega)} \nonumber \\
= & (f(t), \Lambda_{q} P^*_T h(s))_{L^2(\partial\Omega)} - (\Lambda_{q} P^*_T f(t), h(s))_{L^2(\partial\Omega)}, \label{eq:id1}
\end{align}
where the last equality follows from integration by parts.
On the other hand, $I(0,s)=\partial_t I(0,s) = 0$ since $u^f(0,x)=\partial_t u^f(0,x) = 0$. Solve the inhomogeneous $1$D wave equation \eqref{eq:id1} together with these initial conditions to obtain 
\begin{align*}
I(T,T) & = \frac{1}{2} \int^T_0 \int^{2T-t}_{t} \left[ (f(t), \Lambda_{q} P^*_T h(\sigma))_{L^2(\partial\Omega)} - (\Lambda_{q} P^*_T f(t), h(\sigma))_{L^2(\partial\Omega)} \right] \,d\sigma dt \vspace{1ex}\\
 & = \int^T_0  [ ( f(t) ,  \frac{1}{2}\int^{2T-t}_{t} \Lambda_{q} P^*_T h(\sigma) \,d\sigma)_{L^2(\partial\Omega)} - ( \Lambda_{q} P^*_T f(t),  \frac{1}{2}\int^{2T-t}_{t} h(\sigma) \,d\sigma)_{L^2(\partial\Omega)} ] \,dt \vspace{1ex} \\
 & = (f, J \Lambda_{q} P^*_T h)_{L^2((0,T)\times\partial\Omega)} - (P_T(\Lambda_{q} f),J P^*_T h)_{L^2((0,T)\times\partial\Omega)}.
\end{align*}
Using the relations $P_T(\Lambda_{q} f) = \Lambda_{q,T}f$ and $\Lambda^\ast_{q,T} = R \Lambda_{q,T} R$ in $L^2((0,T)\times\partial\Omega)$, we have
\begin{align*}
%I(T,T) & = (P_T f,J \Lambda_{q} P^\ast_T h)_{L^2((0,T)\times\partial\Omega)} - (P_T(\Lambda_{q} f),J P^\ast_T h)_{L^2((0,T)\times\partial\Omega)} \\
I(T,T) & = (f,J \Lambda_{q} P^\ast_T h)_{L^2((0,T)\times\partial\Omega)} - (\Lambda_{q,T} f, J P^\ast_T h)_{L^2((0,T)\times\partial\Omega)} \\
 & = (f,J \Lambda_{q} P^\ast_T h)_{L^2((0,T)\times\partial\Omega)} - (f, R \Lambda_{q,T} R J P^\ast_T h)_{L^2((0,T)\times\partial\Omega)} \\
 & = (f,Kh)_{L^2((0,T)\times\partial\Omega)}
\end{align*}

For general $f, h \in L^2((0,T)\times\partial\Omega)$, simply notice that $K$ is a continuous operator and that compactly supported smooth functions are dense in $L^2$. The proof is completed.
\end{proof}

%\begin{proof}
%We first prove this for $f, h \in C^\infty_c((0,T)\times\partial\Omega)$. Apply Lemma \ref{thm:id} to $u^f$ and $v=u^h$ and notice that $\mathcal{T}_D u^h = \Lambda_{q} P^\ast_T h$ and $\mathcal{T}_N u^h = P^\ast_T h$. One has
%\begin{align*}
%(u^f(T), u^h(T))_{L^2(\Omega,c_0^{-2}dx)} & = (P_T f,J \Lambda_{q} P^\ast_T h)_{L^2((0,T)\times\partial\Omega)} - (P_T(\Lambda_{q} f),J P^\ast_T h)_{L^2((0,T)\times\partial\Omega)} \\
% & = (f,J \Lambda_{q} P^\ast_T h)_{L^2((0,T)\times\partial\Omega)} - (\Lambda_{q,T} f, J P^\ast_T h)_{L^2((0,T)\times\partial\Omega)} \\
% & = (f,J \Lambda_{q} P^\ast_T h)_{L^2((0,T)\times\partial\Omega)} - (f, R \Lambda_{q,T} R J P^\ast_T h)_{L^2((0,T)\times\partial\Omega)} \\
% & = (f,Kh)_{L^2((0,T)\times\partial\Omega)}
%\end{align*}
%where we have used that $P_T(\Lambda_{q} f) = \Lambda_{q,T}f$ and that $\Lambda^\ast_{q,T} = R \Lambda_{q,T} R$ in $L^2((0,T)\times\partial\Omega)$. This establishes the first equality in \eqref{eq:ufuh}. Interchanging $f$ and $h$ yields the second equality in \eqref{eq:ufuh}.
%
%
%For general $f, h \in L^2((0,T)\times\partial\Omega)$, simply notice that $K$ is a continuous operator and that compactly supported smooth functions are dense in $L^2$. The proof is completed.
%\end{proof}

\bigskip

\begin{coro}
Suppose $f, h\in C^\infty_c((0,T]\times\partial\Omega)$. Then
\begin{equation} \label{eq:ufuhpp}
(c_0^2 \Delta u^{f}(T) - q u^{f}(T), u^h(T))_{L^2(\Omega,c_0^{-2}dx)} = (\partial^2_t f,Kh)_{L^2((0,T)\times\partial\Omega)} = (K \partial^2_t f,h)_{L^2((0,T)\times\partial\Omega)}.
\end{equation}
\end{coro}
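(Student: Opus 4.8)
The plan is to reduce the corollary to Proposition~\ref{thm:waveinner} by using the fact that $u^f$ solves the wave equation, which lets us trade the spatial operator $c_0^2\Delta - q$ for a second time derivative. Since $\Box_{c_0,q}u^f=0$, we have $c_0^2\Delta u^f - q u^f = \partial_t^2 u^f$ pointwise, and hence
\[
(c_0^2\Delta u^f(T) - q u^f(T),\, u^h(T))_{L^2(\Omega,c_0^{-2}dx)} = (\partial_t^2 u^f(T),\, u^h(T))_{L^2(\Omega,c_0^{-2}dx)}.
\]
It therefore suffices to identify $\partial_t^2 u^f$ with the solution $u^{\partial_t^2 f}$ carrying Neumann data $\partial_t^2 f$, and then invoke Proposition~\ref{thm:waveinner} with $f$ replaced by $\partial_t^2 f$.

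To carry out this identification I would set $w := \partial_t^2 u^f$. Because the coefficients $c_0$ and $q$ are independent of $t$, the operator $\partial_t^2$ commutes with $\Box_{c_0,q}$, so $\Box_{c_0,q} w = \partial_t^2(\Box_{c_0,q}u^f)=0$; differentiating the Neumann condition twice in $t$ gives $\partial_\nu w = \partial_t^2 f$ on the boundary. For the Cauchy data I would use the hypothesis $f\in C^\infty_c((0,T]\times\partial\Omega)$, which makes $f$ vanish in a neighborhood of $t=0$; by uniqueness for \eqref{eq:bvp} this forces $u^f\equiv 0$ for small $t$, so that $w(0)=\partial_t w(0)=0$. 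Thus $w$ solves \eqref{eq:bvp} with data $\partial_t^2 f$, and uniqueness yields $w = u^{\partial_t^2 f}$.

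With the identification in hand, Proposition~\ref{thm:waveinner} applied to the pair $(\partial_t^2 f, h)$ gives
\[
(\partial_t^2 u^f(T),\, u^h(T))_{L^2(\Omega,c_0^{-2}dx)} = (\partial_t^2 f,\, Kh)_{L^2((0,T)\times\partial\Omega)} = (K\partial_t^2 f,\, h)_{L^2((0,T)\times\partial\Omega)},
\]
which combined with the first display is exactly \eqref{eq:ufuhpp}; here $\partial_t^2 f\in C^\infty_c((0,T]\times\partial\Omega)\subset L^2((0,T)\times\partial\Omega)$, so the $L^2$ version of the proposition applies.

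The step I expect to require the most care is the identification $\partial_t^2 u^f(T)=u^{\partial_t^2 f}(T)$ near the final time. The solution $u^f$ is defined on $(0,2T)\times\Omega$ through the zero-extension $P^\ast_T f$ of the Neumann data, and this extension may be discontinuous at $t=T$ when $f(T)\neq 0$, so that $\partial_t^2(P^\ast_T f)$ is not literally $P^\ast_T(\partial_t^2 f)$ and the commuting argument above is not valid across $t=T$. I would resolve this by finite speed of propagation: the quantities $u^f(T)$ and $\partial_t^2 u^f(T)$ depend only on the Neumann data restricted to $[0,T]$, on which $P^\ast_T f$ coincides with $f$, so the jump at $t=T$ does not affect the values taken at time $T$. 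This is precisely why the hypothesis is posed on the half-open interval $(0,T]$ rather than on all of $(0,2T)$.
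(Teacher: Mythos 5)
Your argument is correct and is essentially the paper's own proof: both replace $f$ by $\partial_t^2 f$ in Proposition~\ref{thm:waveinner} and identify $u^{\partial_t^2 f}$ with $\partial_t^2 u^f = c_0^2\Delta u^f - qu^f$ via uniqueness for \eqref{eq:bvp}. You simply supply more detail than the paper does (the vanishing Cauchy data at $t=0$ and the finite-speed-of-propagation remark about the zero extension past $t=T$), which is welcome but not a different route.
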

\begin{proof}
Replacing $f$ by $\partial^2_t f$ in~\eqref{eq:ufuh}, we get
$$
(u^{\partial^2_t f}(T), u^h(T))_{L^2(\Omega,c_0^{-2}dx)} = (\partial^2_t f,Kh)_{L^2((0,T)\times\partial\Omega)} = (K\partial^2_t f,h)_{L^2((0,T)\times\partial\Omega)}.
$$
As both $u^{\partial^2_t f}$ and $\partial^2_t u^f$ satisfy~\eqref{eq:bvp} with $f$ replaced by $\partial^2_t f$, they must be equal thanks to the well-posedness of the boundary value problem. We conclude
$$
u^{\partial^2_t f} = \partial^2_t u^f
= c_0^2 \Delta u^f - q u^f.
$$
\end{proof}

%\textbf{Remark:} One can replace $f$ in the corollary by $\partial^{-2}_t f$ to reduce the regularity assumption if necessary. Here $\partial^{-1}_t f(t,x)= \int^t_0 f(\tau,x) \,d\tau.$

\bigskip\bigskip
Recall that we write $\Lambda_{q}=\Lambda_{q_0} + \epsilon \dot{\Lambda}_{\dot{q}}$ in the linearization setting. Accordingly, we write $K=K_0+\epsilon\dot{K}$. Here
$K_0$ is the connecting operator for the background medium:
\begin{equation} \label{eq:K0}
K_0:= J \Lambda_{q_0} P^\ast_T - R \Lambda_{q_0,T} R J P^\ast_T.
\end{equation}
$K_0$ can be explicitly computed since $\Lambda_{c,q_0}$ is known.
$\dot{K}$ is the resulting perturbation in the connecting operator:
\begin{equation} \label{eq:Kdot}
\dot{K}:= J \dot{\Lambda}_{\dot{q}} P^\ast_T - R \dot{\Lambda}_{\dot{q},T} R J P^\ast_T.
\end{equation}
$\dot{K}$ can be explicitly computed once $\dot{\Lambda}_{\dot{q}}$ is given.

\section{Integral Identity and Controllability} \label{sec:idandcontrol}

First, we derive an integral identity that is essential to the development of the reconstruction procedure. We write $\dot{\Lambda}$ for $\dot{\Lambda}_{\dot{q}}$ when there is no risk of confusion.

\begin{prop} \label{thm:id}
Let $\lambda\in\mathbb{R}$ be a real number. If $f, h\in C^\infty_c((0,T]\times\partial\Omega)$ satisfy
\begin{equation} \label{eq:fhvanish}
[\Delta - q_0 c_0^{-2} +\lambda c_0^{-2}] u_0^{f}(T) =
[\Delta - q_0 c_0^{-2} + \lambda c_0^{-2}] u_0^h(T) = 0
\quad \text{ in } \Omega,
\end{equation}
then the following identity holds:
\begin{equation} \label{eq:keyid}
-(\dot{q} c_0^{-2} u_0^{f}(T), u^h_0(T))_{L^2(\Omega)} = (\partial^2_t f + \lambda f,\dot{K}h)_{L^2((0,T)\times\partial\Omega)} + (\dot{\Lambda} f(T), h(T))_{L^2(\partial\Omega)}
\end{equation}
\end{prop}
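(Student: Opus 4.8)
The plan is to obtain \eqref{eq:keyid} by linearizing both Proposition~\ref{thm:waveinner} and its Corollary~\eqref{eq:ufuhpp} at $\epsilon=0$ and then forming a suitable linear combination of the two. Writing $(\cdot,\cdot)_w := (\cdot,\cdot)_{L^2(\Omega,c_0^{-2}dx)}$ for brevity, recall that \eqref{eq:ufuh} and \eqref{eq:ufuhpp} hold for the full problem at each $\epsilon$, with $q=q_0+\epsilon\dot q$, $K=K_0+\epsilon\dot K$, and $u^f=u_0^f+\epsilon\dot u^f+O(\epsilon^2)$. Differentiating both identities in $\epsilon$ and evaluating at $\epsilon=0$ — equivalently, repeating their proofs for the inhomogeneous linearized system \eqref{eq:linearizedBVP} — yields
\begin{align*}
(\dot u^f(T),u_0^h(T))_w+(u_0^f(T),\dot u^h(T))_w &= (f,\dot K h), \\
(c_0^2\Delta\dot u^f(T)-q_0\dot u^f(T)-\dot q\,u_0^f(T),u_0^h(T))_w+(c_0^2\Delta u_0^f(T)-q_0 u_0^f(T),\dot u^h(T))_w &= (\partial_t^2 f,\dot K h),
\end{align*}
where the boundary pairings are over $L^2((0,T)\times\partial\Omega)$ and the $\dot q$ term in the second line is exactly the $\epsilon$-derivative of $-q u^f$. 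In the equivalent direct derivation, the two source contributions $(\dot q\,u_0^f(t),u_0^h(s))_w$ and $(u_0^f(t),\dot q\,u_0^h(s))_w$ arising in the $O(\epsilon)$ wave equation cancel, since multiplication by the real function $\dot q$ is self-adjoint.

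Next I multiply the first identity by $\lambda$ and add it to the second. The right-hand sides combine to $(\partial_t^2 f+\lambda f,\dot K h)$. On the left, the terms paired against $\dot u^h(T)$ assemble into $([c_0^2\Delta-q_0+\lambda]u_0^f(T),\dot u^h(T))_w$, which vanishes since hypothesis~\eqref{eq:fhvanish} gives $[c_0^2\Delta-q_0+\lambda]u_0^f(T)=c_0^2[\Delta-q_0c_0^{-2}+\lambda c_0^{-2}]u_0^f(T)=0$. The terms paired against $u_0^h(T)$ assemble into $([c_0^2\Delta-q_0+\lambda]\dot u^f(T),u_0^h(T))_w-(\dot q\,u_0^f(T),u_0^h(T))_w$, and the second summand equals $-(\dot q\,c_0^{-2}u_0^f(T),u_0^h(T))_{L^2(\Omega)}$, which is the left-hand side of \eqref{eq:keyid}. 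For the first summand I apply Green's second identity; since $(c_0^2\Delta v,w)_w=\int_\Omega\Delta v\,w\,dx$ the weight drops out, and I obtain
$$
([c_0^2\Delta-q_0+\lambda]\dot u^f(T),u_0^h(T))_w=(\dot u^f(T),[c_0^2\Delta-q_0+\lambda]u_0^h(T))_w+\int_{\partial\Omega}\!\big(\partial_\nu\dot u^f(T)\,u_0^h(T)-\dot u^f(T)\,\partial_\nu u_0^h(T)\big)\,dS.
$$
The interior term vanishes by \eqref{eq:fhvanish} applied to $h$; in the boundary integral $\partial_\nu\dot u^f(T)=0$ by \eqref{eq:linearizedBVP} and $\partial_\nu u_0^h(T)=h(T)$, so it reduces to $-(\dot u^f(T)|_{\partial\Omega},h(T))_{L^2(\partial\Omega)}=-(\dot\Lambda f(T),h(T))_{L^2(\partial\Omega)}$. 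Collecting terms and rearranging gives \eqref{eq:keyid}.

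The step demanding the most care is the justification of the two linearized identities; the subsequent combination is bookkeeping. The safe route is to re-run the proof of Proposition~\ref{thm:waveinner} for $\dot I(t,s):=(\dot u^f(t),u_0^h(s))_w+(u_0^f(t),\dot u^h(s))_w$, verifying that it satisfies a one-dimensional wave equation in $(t,s)$ of the same form as \eqref{eq:id1} — the two $\dot q$-source terms cancel as noted above — with vanishing Cauchy data at $t=0$ (inherited from the zero initial conditions of both $u_0$ and $\dot u$), and then passing to the Corollary-level identity by the substitution $f\mapsto\partial_t^2 f$ together with the well-posedness argument used there. One should also confirm that the low-pass and time-reversal operators $J,R,P_T$ enter exactly as for $K$, so that the perturbed combination is precisely $\dot K$, and that $f,h\in C^\infty_c((0,T]\times\partial\Omega)$ provides enough regularity for the two time derivatives and the boundary integral at $t=T$; the non-vanishing of $f,h$ at $t=T$ is what produces the surviving boundary term $(\dot\Lambda f(T),h(T))$.
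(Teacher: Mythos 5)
Your proposal is correct and follows essentially the same route as the paper: linearize \eqref{eq:ufuh} and \eqref{eq:ufuhpp} in $\epsilon$, form the combination with weight $\lambda$, integrate by parts using $\partial_\nu\dot u=0$ and $\partial_\nu u_0^h(T)=h(T)$, and kill the remaining interior terms with hypothesis \eqref{eq:fhvanish}. The only difference is cosmetic bookkeeping (you add the $\lambda$-term before applying Green's identity, whereas the paper integrates by parts in $I_1$ first), so no further comment is needed.
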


\begin{proof}

For $f, h\in C^\infty_c((0,T)\times\partial\Omega)$, we will make use of~\eqref{eq:ufuh} ~\eqref{eq:ufuhpp} to obtain some identities. First, let $\epsilon=0$ in~\eqref{eq:ufuh} we obtain
$$
(u_0^f(T), u_0^h(T))_{L^2(\Omega,c_0^{-2}dx)} = (f,K_0h)_{L^2((0,T)\times\partial\Omega)} = (K_0f,h)_{L^2((0,T)\times\partial\Omega)}.
$$
Next, differentiating~\eqref{eq:ufuh} in $\epsilon$ and let $\epsilon=0$, we obtain
\begin{align}
 & (f,\dot{K}h)_{L^2((0,T)\times\partial\Omega)} = (\dot{K}f,h)_{L^2((0,T)\times\partial\Omega)} \nonumber \\
= & (\dot{u}^f(T), u_0^h(T))_{L^2(\Omega,c_0^{-2}dx)} + (u_0^f(T), \dot{u}^h(T))_{L^2(\Omega,c_0^{-2}dx)}  \label{eq:ufuhlinearized}.
\end{align}

Similarly, let $\epsilon=0$ in~\eqref{eq:ufuhpp} we obtain
$$
(c_0^2 \Delta u_0^{f}(T) - q u_0^{f}(T), u_0^h(T))_{L^2(\Omega,c_0^{-2}dx)} = (\partial^2_t f,K_0h)_{L^2((0,T)\times\partial\Omega)} = (K_0 \partial^2_t f,h)_{L^2((0,T)\times\partial\Omega)}.
$$
Next, differentiating~\eqref{eq:ufuhpp} in $\epsilon$ and let $\epsilon=0$, we obtain
\begin{align}
 & (\partial^2_t f,\dot{K}h)_{L^2((0,T)\times\partial\Omega)} = (\dot{K} \partial^2_t f,h)_{L^2((0,T)\times\partial\Omega)} \nonumber \\
 = & (c_0^2 \Delta \dot{u}^{f}(T) - \dot{q} u_0^{f}(T) - q_0 \dot{u}^{f}(T), u_0^h(T))_{L^2(\Omega,c_0^{-2}dx)} \nonumber \\
+ & (c_0^2 \Delta u_0^{f}(T) - q_0 u_0^{f}(T), \dot{u}^h(T))_{L^2(\Omega,c_0^{-2}dx)} \nonumber \\
= & \underbrace{(\Delta \dot{u}^{f}(T), u_0^h(T))_{L^2(\Omega)}
- (q_0 c_0^{-2} \dot{u}^{f}(T), u_0^h(T))_{L^2(\Omega)} }_{:= I_1} \nonumber \\
 - & (\dot{q} c_0^{-2} u_0^{f}(T), u_0^h(T))_{L^2(\Omega)} \nonumber \\
+ & \underbrace{ (\Delta u_0^{f}(T), \dot{u}^h(T))_{L^2(\Omega)}
- (q_0 c_0^{-2} u_0^{f}(T), \dot{u}^h(T))_{L^2(\Omega)} }_{:= I_2} \label{eq:fiveterms}
\end{align}
where $L^2(\Omega) = L^2(\Omega,dx)$ is the $L^2$-space equipped with the usual Lebesgue measure.

For $I_1$, we integrate the first term by parts and use the fact that $\partial_\nu \dot{u}=0$ and $\dot{u}^{f}|_{(0,2T)\times\partial\Omega} = \dot{\Lambda} f$ to get
\begin{align*}
I_1 = & (\Delta \dot{u}^{f}(T), u_0^h(T))_{L^2(\Omega)}
-  (q_0 c_0^{-2} \dot{u}^{f}(T), u_0^h(T))_{L^2(\Omega)} \\
= & (\dot{u}^{f}(T), \Delta u_0^h(T))_{L^2(\Omega)} - 
(\dot{\Lambda} f(T), \partial_\nu u_0^h(T))_{L^2(\partial\Omega)} - 
(q_0 c_0^{-2} \dot{u}^{f}(T), u_0^h(T))_{L^2(\Omega)} \\
= & (\dot{u}^{f}(T), [\Delta - q_0 c_0^{-2}] u_0^h(T))_{L^2(\Omega)} - 
(\dot{\Lambda} f(T), h(T))_{L^2(\partial\Omega)}.
\end{align*}
On the other hand, combing the terms in $I_2$ we have
$$
    I_2 = ( [\Delta - q_0 c_0^{-2}] u_0^{f}(T), \dot{u}^h(T))_{L^2(\Omega)}.
$$
Insert these expressions for $I_1$ and $I_2$ into~\eqref{eq:fiveterms}, then add~\eqref{eq:ufuhlinearized} multiplied by $\lambda\in\mathbb{R}$ to get
\begin{align}
 & (\partial^2_t f + \lambda f, \dot{K}h)_{L^2((0,T)\times\partial\Omega)} + (\dot{\Lambda} f(T), h(T))_{L^2(\partial\Omega)} \nonumber \\
= & (\dot{u}^{f}(T), [\Delta - q_0 c_0^{-2} + \lambda c_0^{-2}] u_0^h(T))_{L^2(\Omega)} 
- (\dot{q} c_0^{-2} u_0^{f}(T), u_0^h(T))_{L^2(\Omega)} 
+ ( [\Delta - q_0 c_0^{-2} +\lambda c_0^{-2}] u_0^{f}(T), \dot{u}^h(T))_{L^2(\Omega)}.
\end{align}
If $[\Delta - q_0 c_0^{-2} +\lambda c_0^{-2}] u_0^{f}(T) =
[\Delta - q_0 c_0^{-2} + \lambda c_0^{-2}] u_0^h(T) = 0
\text{ in } \Omega$, the first term and last term on the right-hand side vanish, resulting in~\eqref{eq:keyid}.
\end{proof}

\bigskip
Next, we establish a boundary control estimate. Given a strictly positive $c_0\in C^\infty(\overline{\Omega})$, we will write $g := c_0^{-2} dx^2$ for the Riemannian metric associated to $c_0$, and denote by $S\overline{\Omega}$ the unit sphere bundle over the closure $\overline{\Omega}$ of $\Omega$.

\begin{prop} \label{thm:control}
Let $c_0\in C^\infty(\overline{\Omega})$ be strictly positive and $q_0\in C^\infty(\overline{\Omega})$.
Suppose that all maximal\footnote{For a maximal geodesic $\gamma : [a, b] \to \overline\Omega$ there may exists $t \in (a,b)$ such that $\gamma(t) \in \p \Omega$. The geodesics are maximal on the closed set $\overline{\Omega}$.} geodesics on $(\overline{\Omega}, g)$ have length strictly less than some fixed $T > 0$.  
Then for any $\phi \in C^\infty(\overline{\Omega})$ there is $f \in C_c^\infty((0,T] \times \p \Omega)$ such that 
    \begin{equation}\label{eq:controleqn}
u_0^f(T) = \phi \quad \text{in $\Omega$},
    \end{equation}
where $u_0$ is the solution of~\eqref{eq:unperturbedBVP}.
Moreover, there is $C>0$, independent of $\phi$, such that
    \begin{equation}\label{control_estimate}
\norm{f}_{H^2((0,T) \times \p \Omega)} 
\le C \norm{\phi}_{H^4(\Omega)}.
    \end{equation}
% Suppose that all maximal\footnote{For a maximal geodesic $\gamma : [a, b] \to \Omega$ there may exists $t \in (a,b)$ such that $\gamma(t) \in \partial \Omega$. The geodesics are maximal on the closed set $\Omega$.} geodesics on $(\Omega, g)$ have length strictly less than some fixed $T > 0$.  
% Then for any $\phi \in C^\infty(\overline{\Omega})$ there is $f \in C_c^\infty((0,T) \times \p \Omega)$ such that 
%     \begin{equation} \label{eq:controleqn}
% u_0^f(T) = \phi \quad \text{in $\Omega$},
%     \end{equation}
% where $u_0$ is the solution of (\ref{eq:unperturbedBVP}).
% Moreover, there is a constant $C>0$, independent of $\phi$, such that
%     \begin{align}\label{control_estimate}
% \norm{f}_{H^2((0,T) \times \partial \Omega)} 
% \leq C \norm{\phi}_{H^4(\Omega)}.
%     \end{align}
\end{prop}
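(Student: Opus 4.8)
The statement is an exact boundary controllability result for the homogeneous Neumann problem, targeting only the final position, so the plan is to argue by duality (the Hilbert Uniqueness Method) reduced to an observability estimate, and to obtain the latter from the geodesic hypothesis via the geometric control condition. First I would introduce the control-to-final-position map $\mathcal{W}:f\mapsto u_0^f(T)$ and identify its adjoint. Fix $\psi$ and let $v$ solve $\Box_{c_0,q_0}v=0$ in $(0,T)\times\Omega$ with $\partial_\nu v=0$ on the lateral boundary and terminal data $v(T)=0$, $\partial_t v(T)=-\psi$. Pairing $\Box_{c_0,q_0}u_0^f$ against $v$ with the weight $c_0^{-2}$ and integrating by parts in space and time (the $q_0$-terms cancel because $-c_0^2\Delta+q_0$ is formally self-adjoint for $c_0^{-2}dx$), the interior contributions and the $t=0$ data drop out, leaving
\[
(u_0^f(T),\psi)_{L^2(\Omega,c_0^{-2}dx)} = (f, \mathcal{T}_D v)_{L^2((0,T)\times\partial\Omega)}.
\]
Hence $\mathcal{W}^\ast\psi=\mathcal{T}_D v$, the Dirichlet trace of the (time-reversed) free Neumann wave with zero initial position and initial velocity $\psi$. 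By the standard duality, surjectivity of $\mathcal{W}$ with a norm bound is equivalent to an observability estimate $\norm{\psi}\le C\norm{\mathcal{T}_D v}_{L^2((0,T)\times\partial\Omega)}$.

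The second step is this observability inequality, and here the geodesic hypothesis enters. Since the control acts on the \emph{entire} boundary, the geometric control condition of Bardos--Lebeau--Rauch reduces to requiring every generalized bicharacteristic to reach $\partial\Omega$ before time $T$; as every maximal geodesic of $(\overline\Omega,g)$ has length strictly less than $T$, this holds, and with $c_0,q_0\in C^\infty(\overline\Omega)$ the Bardos--Lebeau--Rauch theorem yields the observability bound for the free Neumann wave (uniqueness alone would follow from Tataru's unique continuation, but the quantitative estimate needs the geometric condition). Because the terminal data is $(0,-\psi)$, only the velocity component is observed, matching the fact that we constrain $u_0^f(T)$ but not $\partial_t u_0^f(T)$. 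The \emph{strict} inequality is what I will exploit below: it leaves a positive time-margin $T-L>0$, where $L$ is the supremal geodesic length.

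With observability in hand, HUM produces for each target a control with $\mathcal{W}f=\phi$ and a norm bound in the base energy spaces; two upgrades then give the stated conclusion. For the quantitative estimate $\norm{f}_{H^2((0,T)\times\partial\Omega)}\le C\norm{\phi}_{H^4(\Omega)}$, I would use that $\partial_t$ commutes with $\Box_{c_0,q_0}$ and with the boundary conditions, so time-regularity of the control transfers to the state, while elliptic regularity for $\Delta-q_0c_0^{-2}$ (invoked through the equation itself, in the spirit of the vanishing condition~\eqref{eq:fhvanish}) converts this into spatial regularity of $\phi$; running HUM in this Sobolev scale and accounting for the Dirichlet trace theorem produces the two-derivative loss. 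For the compact support in $(0,T]$, finite speed of propagation shows that $u_0^f(T,\cdot)$ is unaffected by the values of $f$ on $(0,T-L)$, since the backward domain of dependence of the final time reaches the boundary only within the margin; thus $f$ may be taken to vanish for $t$ near $0$, and after a smooth temporal cutoff and mollification inside the margin it becomes an element of $C_c^\infty((0,T]\times\partial\Omega)$ without altering $u_0^f(T)=\phi$.

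The hard part will be executing the last paragraph rigorously, that is, obtaining the sharp $H^4\to H^2$ bound and the $C_c^\infty$ regularity \emph{simultaneously}. The Neumann problem has weaker hidden boundary regularity than the Dirichlet problem, so the trace bookkeeping in the Sobolev scale must be tracked carefully, and HUM controls are generically singular at the temporal endpoints; reconciling smoothness and compact support with the controllability estimate is precisely where the strict geodesic-length hypothesis does the real work, by allowing the control problem to be solved on a strict subinterval of $(0,T)$ and smoothly extended by zero near $t=0$.
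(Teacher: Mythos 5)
Your high-level strategy (duality/HUM plus Bardos--Lebeau--Rauch observability from the whole boundary) is a plausible starting point, but the two steps that carry the actual content of the proposition would fail as written. First, the finite-speed-of-propagation claim is backwards: boundary data on $(0,T-L)\times\p\Omega$ \emph{does} influence $u_0^f(T)$ --- a Neumann impulse at $(t,y)$ affects $u_0^f(T,x)$ whenever $T-t\ge d(x,y)$, so it is precisely the early part of $f$ that reaches every point of $\Omega$ by time $T$. (What is true, and presumably what you meant, is that one can pose the control problem on the shorter window $(T-L-\delta,T)$ with zero data at its left endpoint and extend $f$ by zero backwards; that is a different argument.) Second, and fatally, the assertion that a ``smooth temporal cutoff and mollification'' turns the control into an element of $C_c^\infty((0,T]\times\p\Omega)$ ``without altering $u_0^f(T)=\phi$'' is false: perturbing $f$ by $g$ changes the final state by $u_0^{g}(T)\neq 0$, so mollification destroys exactness. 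Since the proposition requires a genuinely smooth $f$ (and the reconstruction formula needs $\p_t^2 f$ pointwise), producing a smooth \emph{exact} control is the whole difficulty, and the proposal never actually does it. Relatedly, the $H^4\to H^2$ bound cannot be obtained by ``running HUM in this Sobolev scale'': time-differentiating the raw HUM control yields a controlled system with nonzero, incompatible data at $t=0,T$, so the commutation argument breaks down; this is exactly the obstruction that the smooth-controls-for-smooth-data construction of \cite{ervedoza2010systematic} removes by inserting the temporal cutoff into the control operator \emph{before} minimizing, not after. Finally, Neumann boundary observability with $L^2$ observation of the Dirichlet trace is not a direct consequence of Bardos--Lebeau--Rauch because of the weaker hidden regularity of the Neumann problem --- you flag this but do not resolve it.

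The paper sidesteps all of these issues by not controlling from the boundary at all. It extends $c_0,q_0$ to a compact domain $\mathcal{K}$ containing $\overline\Omega$ in its interior, uses the hypothesis on geodesic lengths (via a compactness argument on an exit-distance function $\rho$ on $S\overline\Omega$) to find an open set $\omega_0\subset\mathcal{K}\setminus\overline\Omega$ satisfying the geometric control condition in time $T-2\delta$, and applies \cite[Theorem 5.1]{ervedoza2010systematic} to obtain a smooth \emph{interior} control $\eta\chi Y$ supported in $\omega_0$ for the \emph{Dirichlet} problem on $\mathcal{K}$, steering $0$ to $(\phi,0)$. Setting $f:=\p_\nu v|_{\p\Omega}$, the restriction $v|_\Omega$ solves the homogeneous Neumann problem \eqref{eq:unperturbedBVP} because the source is supported away from $\overline\Omega$; smoothness of $f$ and its vanishing near $t=0$ are automatic (the latter because $\eta$ vanishes there), and \eqref{control_estimate} follows from the closed graph theorem applied to $\phi\mapsto(Y|_{t=0},\p_t Y|_{t=0})$ together with energy estimates. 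A direct boundary-control proof along your lines would essentially require reproving the Ervedoza--Zuazua result for the Neumann control operator, which is substantially harder than the interior/Dirichlet case the paper reduces to.
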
 
\begin{proof}
%Note that as $\overline{\Omega}$ is closed, smoothness of $c_0$ and $q_0$ mean smoothness up to $\p \Omega$.
There is small $\delta > 0$ such that 
the maximal geodesics have length less than $T^* := T - 2 \delta$.
We extend $c_0$ and $q_0$ smoothly to $\R^n$.
Then there is a compact domain $\mathcal{K}$ with smooth boundary 
such that $\overline{\Omega}$ is contained in the interior of $\mathcal{K}$ and that the extended tensor $g = c_0^{-2} dx$ gives a Riemannian metric on $\mathcal{K}$.
Let $x \in \overline{\Omega}$ and let $\xi \in S_x \overline{\Omega}$, that is, $\xi$ is a unit vector with respect to $g$ at $x$. Write $\gamma_{x,\xi} : [a, b] \to \overline{\Omega}$ for the maximal geodesic with the initial data $(x,\xi)$. Then $b < T^*$.
We extend $\gamma_{x,\xi}$ as the maximal geodesic in $\mathcal{K}$
and write $\gamma_{x,\xi} : [\hat a, \hat b] \to \mathcal{K}$.
Then there are $t_j > b$ such that $t_j \to b$ and $\gamma_{x,\xi}(t_j) \notin \overline{\Omega}$. 
If $\hat b < T^*$ we extend $\gamma_{x,\xi}$ to $[\hat a, T^*]$ by setting 
$\gamma_{x,\xi}(t) = \gamma_{x,\xi}(\hat b)$ for $t > \hat b$.
Then the function 
    \begin{align*}
\rho(x,\xi) := \max \{d(\gamma_{x,\xi}(t), \overline{\Omega}) : t \in [0, T^*]\}
    \end{align*}
satisfies $\rho(x,\xi) > 0$.
Let us show that $\rho$ is continuous.
It follows from the smoothness of the geodesic flow and the triangle inequality that the function 
    \begin{align*}
S\overline{\Omega} \times [0,T^*] \times \overline{\Omega} \ni (x,\xi,t,y) \mapsto d(\gamma_{x,\xi}(t), y)
    \end{align*}
is uniformly continuous. By Lemma~\ref{thm:unicont} in the appendix, the function
%\HOX{See the lemma below. I am not sure if we want to include this in the paper. The analogue for max is true as well.\\
%{\color{blue}I included the lemma in the appendix.}
%}
    \begin{align*}
S\overline{\Omega} \times [0,T^*] \ni (x,\xi,t) \mapsto d(\gamma_{x,\xi}(t), \overline{\Omega}),
    \end{align*}
is uniformly continuous, and so is $\rho$. 

We have show that the continuous function $\rho$ is strictly positive on the compact set $S \overline{\Omega}$. Thus there is an open set $\omega_0$
such that $\overline{\omega_0} \subset \mathcal{K} \setminus \overline{\Omega}$
and that all geodesics $\gamma_{x,\xi}$ with $(x,\xi) \in S\overline{\Omega}$
intersect $\omega_0$ in time $T^*$. 

We choose $\eta \in C_c^\infty(0,T)$ taking values in $[0,1]$ and satisfying $\eta = 1$ on $[\delta, T-\delta]$,
and $\chi \in C^\infty_c(\mathcal{K} \setminus \overline{\Omega})$
satisfying $\chi = 1$ on $\omega_0$.
Finally, we choose an extension $\phi \in C_c^\infty(\mathcal{K})$.
Now \cite[Theorem 5.1]{ervedoza2010systematic} implies that
there is $Y \in C^\infty((0,T) \times (\mathcal{K} \setminus \overline{\Omega}))$
such that 
    \begin{align}\label{eq_control}
\begin{cases}
\Box_{c_0,q_0} v = \eta \chi Y,
\\
v|_{x \in \p \mathcal{K}} = 0,
\\
v|_{t=T} = \phi,\ \p_t v|_{t=T} = 0,
\\
v|_{t=0} = 0,\ \p_t v|_{t=0} = 0.
\end{cases} 
    \end{align}
%where we used the shorthand notation $\Box v= \p_t^2 v - c_0^2 \Delta v + q_0 v$.
We extend $v$ to $(0,2T) \times \mathcal{K}$ by solving
    \begin{align*}
\begin{cases}
\Box_{c_0,q_0} v = 0,
\\
v|_{x \in \p \mathcal{K}} = 0,
\\
v|_{t=T} = \phi,\ \p_t v|_{t=T} = 0,
\end{cases} 
    \end{align*}
and set $f = \p_\nu v|_{x \in \p \Omega}$.
As $\eta(t)  = 0$ when $t$ is close to zero 
and as $v$ satisfies vanishing initial conditions at $t=0$,
also $f(t,x) = 0$ for $t$ near zero. 

It remains to show estimate (\ref{control_estimate}).
The extension of $\phi$ can be chosen so that 
    \begin{align*}
\norm{\phi}_{H^4(\mathcal{K})} \le C \norm{\phi}_{H^4(\Omega)}.
    \end{align*}
By \cite[Theorem 5.1]{ervedoza2010systematic}
there is a map  
    \begin{align}\label{Y_map}
G : H^4(\mathcal{K}) \to H^3(\mathcal{K}) \times H^2(\mathcal{K}),
\quad
G(\phi) = (Y|_{t=0},\p_t Y|_{t=0}),
    \end{align}
and $Y$ satisfies
    \begin{align*}
\begin{cases}
\Box_{c_0,q_0} Y = 0, 
\\
Y|_{x \in \p \mathcal{K}}=0.
\end{cases}
    \end{align*}
As continuity of (\ref{Y_map}) is not explicitly stated in \cite{ervedoza2010systematic}, we will show the continuity for the convenience of the reader.

Due to the closed graph theorem it is enough to show that
if $\phi_j \to \phi$ in $H^4(\mathcal{K})$
and $G(\phi_j) \to (Y_0, Y_1)$ in $H^3(\mathcal{K}) \times H^2(\mathcal{K})$
then $G(\phi) = (Y_0, Y_1)$. We define $v_j$ and $Y_j$ by
    \begin{align*}
\begin{cases}
\Box_{c_0,q_0} v_j = \eta\chi Y_j, 
\\
v_j|_{x \in \p \mathcal{K}}=0,
\\
v_j|_{t=0} = 0,\ \p_t v_j|_{t=0} = 0,
\end{cases}
\qquad
\begin{cases}
\Box_{c_0,q_0} Y_j = 0, 
\\
Y_j|_{x \in \p \mathcal{K}}=0,
\\
(Y_j|_{t=0}, \p_t Y_j|_{t=0}) = F(\phi_j),
\end{cases}
    \end{align*}
Then $v_j \to v$ and $Y_j \to Y$ where $v$ and $Y$ solve
    \begin{align*}
\begin{cases}
\Box_{c_0,q_0} v = \eta\chi Y, 
\\
v|_{x \in \p \mathcal{K}}=0,
\\
v|_{t=0} = 0,\ \p_t v|_{t=0} = 0,
\end{cases}
\qquad
\begin{cases}
\Box_{c_0,q_0} Y = 0, 
\\
Y|_{x \in \p \mathcal{K}}=0,
\\
(Y|_{t=0}, \p_t Y|_{t=0}) = (Y_0, Y_1),
\end{cases}
    \end{align*}
and also 
    \begin{align*}
v_j|_{t=T} = \lim_j v_j|_{t=T} = \lim_j \phi_j = \phi,
\quad \p_t v|_{t=T} = \lim_j \p_t v_j|_{t=T} = 0.
    \end{align*}
But the control given by \cite[Theorem 5.1]{ervedoza2010systematic}
for $\phi$ is characterized by the following system 
    \begin{align*}
\begin{cases}
\Box_{c_0,q_0} v = \eta\chi Y, 
\\
v|_{x \in \p \mathcal{K}}=0,
\\
v|_{t=0} = \phi,\ \p_t u|_{t=0} = 0,
\\
v|_{t=T} = 0,\ \p_t u|_{t=T} = 0,
\end{cases}
\qquad
\begin{cases}
\Box_{c_0,q_0} Y = 0, 
\\
Y|_{x \in \p \mathcal{K}}=0,
\end{cases}
    \end{align*}
%\HOX{the ref is arXiv:2109.07890}
see~\cite{burman2021spacetime}. Hence the control $Y$ that we obtained as the limit coincides with that given by \cite[Theorem 5.1]{ervedoza2010systematic},
and $G(\phi) = (Y_0, Y_1)$.

By \cite[Theorem 5.1]{ervedoza2010systematic}
the initial data $G(\phi)$ satisfies suitable compatibility conditions so that $Y \in H^3((0,T) \times \mathcal{K})$.
Moreover, solving (\ref{eq_control}) from the initial data at $t=0$, while discarding the constaint at $t=T$, gives
    \begin{align*}
\norm{v}_{H^4((0,T) \times \mathcal{K})} 
\le C \norm{Y}_{H^3((0,T) \times \mathcal{K})} 
\le C \norm{G(\phi)}_{H^3(\mathcal{K}) \times H^2(\mathcal{K})}
\le C \norm{\phi}_{H^4(\mathcal{K})}.
    \end{align*}
\end{proof}

\textbf{Remark:}
Solvability of an equation like~\eqref{eq:controleqn} is a central question in the boundary control theory. There are other results which ensure the solvability in other function spaces. For example, if we define
$$
W_0: L^2((0,T)\times\partial\Omega)\rightarrow L^2(\Omega, c_0^{-2}dx), \quad\quad\quad W_0 f=u^f(T).
$$
This clearly is a bounded linear operator. Moreover, if $T>0$ is large enough, the range $Ran(W_0)$ is dense in $L^2(\Omega)$ by Tataru’s unique continuation~\cite{tataru1995unique}.
If we further assume that the continuous observability condition~\cite{liu2016lipschitz} holds for the background wave solution, then $W_0$ is surjective, that is, $Ran(W_0) = L^2(\Omega)$.
This ensures the existence of solutions $f,h \in L^2((0,T)\times\partial\Omega)$ for the equation~\eqref{eq:normal}.
However, such $L^2$-regularity is insufficient for our purpose, as we need $\partial^2_t f$ to exist, see~\eqref{eq:reconformula}.

\section{Stability and Reconstruction} \label{sec:stabandrecon}

We will make use of Proposition~\ref{thm:id} and Proposition~\ref{thm:control} to derive stability estimate and reconstruction formulae for $\dot{q}$, on the premise that the wave speed is constant. Without loss of generality, we take $c_0=1$. The discussion is separate for vanishing and non-vanishing background potentials $q_0$.

\subsection{Case 1: $c_0= 1$ and $q_0= 0$} \label{sec:q0vanish}
%(REPLACE $q$ BY $-q$ IN THE LAST FEW SECTIONS.  )
We take $\lambda\geq 0$ and dimension $n\geq 1$, then the equation~\eqref{eq:fhvanish} for $u^f_0(T)$ and $u^h_0(T)$ becomes the Helmholtz equation
$$
(\Delta +\lambda) u_0^{f}(T) =
(\Delta + \lambda) u_0^h(T) = 0
\quad \text{ in } \Omega.
$$
%Let $\theta,\omega\in\mathbb{R}^n$ be two vectors such that $\theta\perp\omega$. 

A class of Helmholtz solutions are the plane waves $e^{i \sqrt{\lambda}\theta\cdot x}$ where $\theta \in\mathbb{S}^{n-1}$ is an arbitrary unit vector.
% $$
% \phi_0(x) := e^{i (k\theta + l \omega)\cdot x}, \quad\quad 
% \psi_0(x) := e^{i (k\theta - l \omega)\cdot x}
% $$
% where $k\geq 0$, $l\geq 0$ are two real numbers such that $k^2+l^2=\lambda$. 
Moreover, Proposition~\ref{thm:control} ensures the existence of $f,h\in C^\infty_c((0,T]\times\partial\Omega)$ such that
\begin{equation} \label{eq:normal}
u^f_0(T) = u^h_0(T) = e^{i \sqrt{\lambda}\theta\cdot x}.
\end{equation}

\bigskip
\begin{thm} \label{thm:recon1}
Suppose $c_0= 1$ and $q_0= 0$. Then the Fourier transform $\hat{\dot{q}}$ of $\dot{q}$ can be reconstructed as follows:
\begin{equation} \label{eq:reconformula}
\hat{\dot{q}}(2\sqrt{\lambda}\theta) = - (\partial^2_t f + \lambda f,\dot{K}h)_{L^2((0,T)\times\partial\Omega)} - (\dot{\Lambda} f(T), h(T))_{L^2(\partial\Omega)}
\end{equation}
where $f,h \in C^\infty_c((0,T]\times\partial\Omega)$ are solutions to~\eqref{eq:normal}.
\end{thm}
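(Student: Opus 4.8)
The plan is to specialize the integral identity of Proposition~\ref{thm:id} to the plane-wave controls and then recognize the resulting left-hand side as a Fourier coefficient of $\dot q$. First I would record that, with $c_0 = 1$ and $q_0 = 0$, the constraint~\eqref{eq:fhvanish} reduces to the homogeneous Helmholtz equation $(\Delta + \lambda)u_0^f(T) = (\Delta + \lambda)u_0^h(T) = 0$, and that the plane wave $e^{i\sqrt{\lambda}\theta\cdot x}$ is a smooth (complex) solution, since $\Delta e^{i\sqrt{\lambda}\theta\cdot x} = -\lambda\, e^{i\sqrt{\lambda}\theta\cdot x}$. The geometric hypothesis of Proposition~\ref{thm:control} is met because the metric $g = dx^2$ is Euclidean (so maximal geodesics are chords of $\overline{\Omega}$) and $T$ exceeds the length of the longest such chord; hence for every $\theta\in\mathbb{S}^{n-1}$ and $\lambda\ge 0$ there exist controls $f,h\in C_c^\infty((0,T]\times\p\Omega)$ realizing~\eqref{eq:normal}. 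Complex-valued targets are handled by applying the real construction of Proposition~\ref{thm:control} separately to real and imaginary parts.

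Next I would apply Proposition~\ref{thm:id}. Because the chosen $f,h$ satisfy the Helmholtz equation, the hypothesis~\eqref{eq:fhvanish} holds with $\lambda\ge 0$, so~\eqref{eq:keyid} is available. Setting $c_0 = 1$, its left-hand side is $-(\dot q\, u_0^f(T), u_0^h(T))_{L^2(\Omega)}$. Substituting~\eqref{eq:normal} gives
\[
-(\dot q\, u_0^f(T), u_0^h(T))_{L^2(\Omega)} = -\int_\Omega \dot q(x)\, e^{i\sqrt{\lambda}\theta\cdot x}\, e^{i\sqrt{\lambda}\theta\cdot x}\,dx = -\int_\Omega \dot q(x)\, e^{i 2\sqrt{\lambda}\theta\cdot x}\,dx = -\hat{\dot q}(2\sqrt{\lambda}\theta),
\]
under the paper's Fourier convention. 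Rearranging~\eqref{eq:keyid} to isolate this term then yields exactly~\eqref{eq:reconformula}.

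The computation itself is short; the genuine content has already been packaged into Propositions~\ref{thm:id} and~\ref{thm:control}. The one point that must be handled carefully is the consistency of the $L^2$ pairing convention (bilinear versus conjugate-linear) with the stated Fourier convention, so that the \emph{product} of the two exponentials produces $e^{i2\sqrt{\lambda}\theta\cdot x}$ rather than collapsing to a constant; if the conjugated inner product is in force, one instead takes $u_0^h(T) = e^{-i\sqrt{\lambda}\theta\cdot x}$ so that the integrand again becomes $\dot q(x)\,e^{-i2\sqrt{\lambda}\theta\cdot x}$. I do not expect any substantive obstacle beyond this bookkeeping: since $\theta$ ranges over all of $\mathbb{S}^{n-1}$ and $\lambda\ge 0$ is free, the formula pins down $\hat{\dot q}$ on every frequency vector, so $\dot q$ is recovered by Fourier inversion.
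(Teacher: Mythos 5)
Your proof is correct and follows the same route as the paper's own (one-line) proof: insert the plane-wave targets supplied by Proposition~\ref{thm:control} into the identity~\eqref{eq:keyid} of Proposition~\ref{thm:id} and read off the left-hand side as $-\hat{\dot q}(2\sqrt{\lambda}\theta)$. Your added care about the bilinear versus conjugate-linear $L^2$ pairing (the formula only produces $e^{i2\sqrt{\lambda}\theta\cdot x}$ under the bilinear convention, or with $u_0^h(T)=e^{-i\sqrt{\lambda}\theta\cdot x}$ otherwise) and about realizing complex-valued targets via real and imaginary parts addresses genuine points that the paper leaves implicit.
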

\begin{proof}
The formula is obtained by inserting $u^f_0(T) = u^h_0(T) = e^{i \sqrt{\lambda}\theta\cdot x}$ into~\eqref{eq:keyid}.
As $\lambda \geq 0$ and $\theta$ are arbitrary, it recovers the Fourier transform of $\dot{q}$ everywhere.
\end{proof}

\begin{remark}
An explicit procedure to solve for $f$ and $h$ from~\eqref{eq:normal} is explained in Section~\ref{sec:BC}.
\end{remark}

\bigskip
Next, we show that the reconstruction above has Lipschitz-type stability. As the inverse problem is linear, it suffices to bound $\dot{q}$ using continuous functions of $\dot{\Lambda}$.

\begin{thm} \label{thm:stab1}
Suppose $c_0= 1$ and $q_0= 0$. 
There exists a constant $C>0$, independent of $\lambda$, such that 
$$
\left| \hat{\dot{q}}(\sqrt{2\lambda}\theta) \right| \leq C (2+\lambda)\lambda^4  
\left(
   \|\dot{K}\|_{L^2((0,T)\times\partial\Omega) \rightarrow L^2((0,T)\times\partial\Omega)} + 
   \|\dot{\Lambda}\|_{H^2((0,T)\times\partial\Omega) \rightarrow H^3((0,T)\times\partial\Omega)}
   \right)
$$
Here $\dot{K}$ is viewed as a linear function of $\dot{\Lambda}$ as is defined in~\eqref{eq:Kdot}. \label{thm:nopotential}
\end{thm}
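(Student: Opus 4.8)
The plan is to start from the reconstruction formula~\eqref{eq:reconformula} and bound each of its two terms by an operator norm ($\|\dot{K}\|_{L^2\to L^2}$ or $\|\dot{\Lambda}\|_{H^2\to H^3}$) times Sobolev norms of the controls $f,h$ that produce the plane wave $\phi := e^{i\sqrt{\lambda}\theta\cdot x}$ through~\eqref{eq:normal}. The Sobolev norms of $f$ and $h$ are in turn controlled by $\|\phi\|_{H^4(\Omega)}$ via the control estimate~\eqref{control_estimate} of Proposition~\ref{thm:control}, whose constant is independent of $\phi$, hence of $\lambda$. Since the linearized problem is linear in $\dot{q}$, this immediately produces a bound on $|\hat{\dot{q}}(2\sqrt{\lambda}\theta)|$ in terms of $\|\dot{K}\|$ and $\|\dot{\Lambda}\|$, and the only $\lambda$-dependence I must track is the explicit growth of $\|\phi\|_{H^4(\Omega)}$ together with the factor $\partial_t^2 f + \lambda f$.

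For the first term, Cauchy--Schwarz and the definition of the operator norm give
\[
|(\partial_t^2 f + \lambda f,\dot{K}h)_{L^2((0,T)\times\partial\Omega)}| \le \|\partial_t^2 f + \lambda f\|_{L^2}\,\|\dot{K}\|_{L^2\to L^2}\,\|h\|_{L^2}.
\]
I would then estimate $\|\partial_t^2 f + \lambda f\|_{L^2} \le (1+\lambda)\|f\|_{H^2((0,T)\times\partial\Omega)}$ and $\|h\|_{L^2}\le\|h\|_{H^2}$, and invoke~\eqref{control_estimate} to replace $\|f\|_{H^2},\|h\|_{H^2}$ by $C\|\phi\|_{H^4(\Omega)}$. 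This reduces everything to a single elementary computation: differentiating $\phi=e^{i\sqrt{\lambda}\theta\cdot x}$ shows $\|D^\alpha\phi\|_{L^2(\Omega)}\le \lambda^{|\alpha|/2}|\Omega|^{1/2}$ (using $|\theta|=1$), so $\|\phi\|_{H^4(\Omega)}\le C(1+\lambda^2)$. Hence the first term is bounded by $C(1+\lambda)(1+\lambda^2)^2\|\dot{K}\|$.

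For the second term I would first use a time-trace inequality: for $w\in H^s((0,2T)\times\partial\Omega)$ with $s>1/2$, the slice $w(T,\cdot)$ lies in $L^2(\partial\Omega)$ with $\|w(T,\cdot)\|_{L^2(\partial\Omega)}\le C\|w\|_{H^s}$. Applying this to $h$ (with $s=2$) gives $\|h(T)\|_{L^2(\partial\Omega)}\le C\|h\|_{H^2}\le C(1+\lambda^2)$, and applying it to $\dot{\Lambda}f$ (with $s=3$) gives $\|\dot{\Lambda}f(T)\|_{L^2(\partial\Omega)}\le C\|\dot{\Lambda}f\|_{H^3((0,2T)\times\partial\Omega)}\le C\|\dot{\Lambda}\|_{H^2\to H^3}\|f\|_{H^2((0,2T)\times\partial\Omega)}$. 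Viewing $f$ (supported in $(0,T]$) as an element of $H^2((0,2T)\times\partial\Omega)$ by extension and again using~\eqref{control_estimate}, the second term is bounded by $C(1+\lambda^2)^2\|\dot{\Lambda}\|$.

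Combining the two estimates yields $|\hat{\dot{q}}(2\sqrt{\lambda}\theta)|\le C(1+\lambda)(1+\lambda^2)^2\,(\|\dot{K}\|+\|\dot{\Lambda}\|)$, which has the growth $C(2+\lambda)\lambda^4$ claimed in the statement for $\lambda$ bounded below, the remaining lower-order terms only improving the bound, and with a constant independent of $\lambda$. The step I expect to be the main obstacle is the second term: one must justify that the time trace of $\dot{\Lambda}f$ at $t=T$ is genuinely controlled by $\|\dot{\Lambda}\|_{H^2\to H^3}\|f\|_{H^2}$, which requires (i) the $H^3$-smoothing of $\dot{\Lambda}$ to absorb the half-derivative loss of the time trace, and (ii) treating $f$, whose support reaches the endpoint $t=T$ while $\dot{\Lambda}$ acts on Neumann data over the full interval $(0,2T)$, as a controlled element of $H^2((0,2T)\times\partial\Omega)$. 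Confirming that the control-estimate constant of Proposition~\ref{thm:control} does not degrade as $\lambda\to\infty$ is precisely what makes the final bound uniform in $\lambda$, and this uniformity is the crux of the Lipschitz-type conclusion.
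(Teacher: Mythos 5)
Your route is the same as the paper's: start from \eqref{eq:reconformula}, bound the $\dot{K}$-term by Cauchy--Schwarz together with $\|\partial_t^2 f+\lambda f\|_{L^2}\le(1+\lambda)\|f\|_{H^2}$, bound the boundary term via the trace at $t=T$ and the operator norm $\|\dot{\Lambda}\|_{H^2\to H^3}$, and convert $\|f\|_{H^2}$, $\|h\|_{H^2}$ into powers of $\lambda$ through the control estimate \eqref{control_estimate} applied to $\phi=e^{i\sqrt{\lambda}\theta\cdot x}$ (whose constant is indeed uniform in $\phi$, hence in $\lambda$). Your $\lambda$-counting, $C(1+\lambda)(1+\lambda^2)^2$, matches the stated $C(2+\lambda)\lambda^4$ up to the behaviour near $\lambda=0$, which you correctly note.

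The genuine gap is exactly the step you defer as ``the main obstacle.'' The chain $\|\dot{\Lambda}f(T)\|_{L^2(\partial\Omega)}\le C\|\dot{\Lambda}f\|_{H^3}\le C\|\dot{\Lambda}\|_{H^2\to H^3}\|f\|_{H^2}$ only yields a Lipschitz-type stability statement if $\dot{\Lambda}$ is in fact a bounded operator from $H^2((0,T)\times\partial\Omega)$ to $H^3((0,T)\times\partial\Omega)$; otherwise the right-hand side of the theorem is $+\infty$ and the estimate is vacuous. Establishing this mapping property is the bulk of the paper's proof: one extends $f$ (which vanishes near $t=0$) to $F\in H^{2+\frac{3}{2}}((0,T)\times\Omega)$ with $\partial_\nu F=f$ and $\|F\|_{H^{3+\frac{1}{2}}}\le C\|f\|_{H^2}$, writes $v:=F-u_0$ so that $v$ solves a wave equation with homogeneous Neumann data and source $\Box_{c_0,q_0}F$, invokes hyperbolic regularity to get $u_0\in H^{2+\frac{1}{2}}((0,T)\times\Omega)$, feeds $\dot{q}u_0$ as the source of \eqref{eq:linearizedBVP} to get $\dot{u}\in H^{3+\frac{1}{2}}((0,T)\times\Omega)$, and concludes $\|\dot{\Lambda}f\|_{H^3((0,T)\times\partial\Omega)}\le C\|f\|_{H^2((0,T)\times\partial\Omega)}$ by the trace theorem. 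This also disposes of your concern about the interval $(0,2T)$ versus $(0,T)$: the paper works with the restricted map throughout, so no extension of $f$ past $t=T$ is needed. You name the two difficulties (the half-derivative loss of the time trace and the smoothing of $\dot{\Lambda}$) but do not resolve them; without this regularity argument, or an equivalent one, the proof is incomplete.
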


\begin{proof}
For a bounded linear operator $T: \mathcal{X}\rightarrow\mathcal{Y}$ between two Hilbert spaces $\mathcal{X}$ and $\mathcal{Y}$, we write $\|T\|_{\mathcal{X}\rightarrow\mathcal{Y}}$ for the operator norm of $T$. 
Let $f,h\in C^\infty_c((0,T]\times\partial\Omega)$ be solutions of~\eqref{eq:normal} obtained from Proposition~\ref{thm:control}. We employ~\eqref{eq:reconformula} to estimate: 
\begin{align*}
\left| \mathcal{F}[\dot{q} ](\sqrt{2\lambda}\theta) \right| \leq & 
\|\partial^2_t f + \lambda f \|_{L^2((0,T)\times\partial\Omega)} 
\|h \|_{L^2((0,T)\times\partial\Omega)} 
    \|\dot{K}\|_{L^2((0,T)\times\partial\Omega) \rightarrow L^2((0,T)\times\partial\Omega)} \\
+ & \|\dot{\Lambda}f(T)\|_{L^2(\partial\Omega)} \|h(T)\|_{L^2(\partial\Omega)} \\
\leq & (1+\lambda) \|f \|_{H^2((0,T)\times\partial\Omega)}
\|h \|_{L^2((0,T)\times\partial\Omega)} 
    \|\dot{K}\|_{L^2((0,T)\times\partial\Omega) \rightarrow L^2((0,T)\times\partial\Omega)} \\
+ & \|\dot{\Lambda}f\|_{H^1((0,T)\times\partial\Omega)} \|h\|_{H^1((0,T)\times\partial\Omega)}
\end{align*}
by the continuity of the trace operator.

It remains to estimate $\|\dot{\Lambda}f\|_{H^1((0,T)\times\partial\Omega)}$. For this purpose, we extend $f\in H^{2}((0,T)\times\partial\Omega)$ to a function $F\in H^{2+\frac{3}{2}}((0,T)\times\Omega)$ so that $\partial_\nu F|_{(0,T)\times\partial\Omega}=f$ and $F(t,x)=0$ for $x\in\Omega$ and $t$ close to $0$ (recall that $f(t,x)=0$ for $t$ near $0$). Such $F$ can be chosen to fulfill
$$
    \|F\|_{H^{3+\frac{1}{2}}((0,T)\times\Omega)} 
    \leq C \|f\|_{H^{2}((0,T)\times\partial\Omega)}
$$
Set $v:=F-u_0$ where $u_0$ is the solution of~\eqref{eq:unperturbedBVP}, then $v$ satisfies
$$
\left\{
\begin{array}{rcll}
\Box_{c_0,q_0} v & = & \Box_{c_0,q_0} F, &\quad \text{ in } (0,2T) \times \Omega \\
 \partial_\nu v  & = & 0, &\quad \text{ on } (0,2T) \times \partial\Omega \\ 
 v|_{t=0} = \partial_t v|_{t=0} & = & 0, &\quad x \in \Omega.
\end{array}
\right.
$$
The regularity estimate for the wave equation~\cite{evans1998partial} implies
$$
    \|v\|_{H^{2+\frac{1}{2}}((0,T)\times\Omega)} 
    \leq C \|\Box_{c_0,q_0} F\|_{H^{1+\frac{1}{2}}((0,T)\times\Omega)}  
    \leq C \|F\|_{H^{3+\frac{1}{2}}((0,T)\times\Omega)}
$$
We conclude $u_0\in H^{2+\frac{1}{2}}((0,T)\times\Omega)$ and $\dot{q} u_0 \in H^{2+\frac{1}{2}}((0,T)\times\Omega)$.
The same regularity estimate for the wave equation applied to~\eqref{eq:linearizedBVP} implies
$$
    \|\dot{u}\|_{H^{3+\frac{1}{2}}((0,T)\times\Omega)} 
    \leq C \|\dot{q} u_0\|_{H^{2+\frac{1}{2}}((0,T)\times\Omega)}  
    \leq C \|u_0\|_{H^{2+\frac{1}{2}}((0,T)\times\Omega)}  
$$
These inequalities together with the trace estimate yield
$$
    \|\dot{\Lambda}f\|_{H^{3}((0,T)\times\partial\Omega)} 
    \leq C \|\dot{u}\|_{H^{3+\frac{1}{2}}((0,T)\times\Omega)}
    \leq C \|f\|_{H^{2}((0,T)\times\partial\Omega)} 
$$
where the constant $C>0$ is independent of $f$. Hence $\dot{\Lambda}:H^{2}((0,T)\times\partial\Omega) \rightarrow H^{3}((0,T)\times\partial\Omega)$ is a bounded linear operator.

Finally, we can complete the stability estimate:
\begin{align*}
  \left| \mathcal{F}[ \dot{q} ](\sqrt{2\lambda}\theta) \right| \leq & (1+\lambda) \|f \|_{H^2((0,T)\times\partial\Omega)}
    \|h \|_{H^1((0,T)\times\partial\Omega)} 
    \|\dot{K}\|_{L^2((0,T)\times\partial\Omega) \rightarrow L^2((0,T)\times\partial\Omega)} \\
   + & \|\dot{\Lambda}f\|_{H^3((0,T)\times\partial\Omega)} \|h\|_{H^1((0,T)\times\partial\Omega)}  \\
   \leq & (1+\lambda) \|f \|_{H^2((0,T)\times\partial\Omega)}
    \|h \|_{H^1((0,T)\times\partial\Omega)} 
    \|\dot{K}\|_{L^2((0,T)\times\partial\Omega) \rightarrow L^2((0,T)\times\partial\Omega)} \\
   + & \|f\|_{H^2((0,T)\times\partial\Omega)}
   \|\dot{\Lambda}\|_{H^2((0,T)\times\partial\Omega) \rightarrow H^3((0,T)\times\partial\Omega)}
   \|h\|_{H^1((0,T)\times\partial\Omega)} \\
   \leq & C_\lambda \left(
   \|\dot{K}\|_{L^2((0,T)\times\partial\Omega) \rightarrow L^2((0,T)\times\partial\Omega)} + 
   \|\dot{\Lambda}\|_{H^2((0,T)\times\partial\Omega) \rightarrow H^3((0,T)\times\partial\Omega)}
   \right)
\end{align*}
where the constant $C_\lambda$ satisfies (see~\eqref{control_estimate})
\begin{align*}
    C_\lambda & := (2+\lambda) \|f \|_{H^2((0,T)\times\partial\Omega)}
    \|h \|_{H^1((0,T)\times\partial\Omega)} \\
    \leq & C (2+\lambda) \|e^{i \sqrt{\lambda}\theta\cdot x} \|^2_{H^4(\Omega)} \leq C (2+\lambda)\lambda^4.
\end{align*}
for some constant $C>0$ independent of $\lambda$.
\end{proof}

\subsection{Case 2: $c_0= 1$ and $q_0\neq 0$}

Let $\lambda \geq 0$ again, then the equations for $u^f_0(T)$ and $u^h_0(T)$ become the perturbed Helmholtz equation
$$
[\Delta + \lambda - q_0] u_0^{f}(T) =
[\Delta + \lambda - q_0] u_0^h(T) = 0
\quad \text{ in } \Omega.
$$
%Without loss of generality, we may assume $c_0= 1$ (otherwise, one first reconstructs $q_0 c^{-2}_0$, then $q_0$ is readily available since $c_0$ is known).
A class of solutions are total waves of the form
\begin{equation} \label{eq:phi}
\phi(x) = e^{i\sqrt{\lambda} \theta\cdot x} + r(x;\lambda)
\end{equation}
with the scattered wave $r(x;\lambda)$ satisfying
\begin{equation} \label{eq:rdecay}
\|r\|_{H^s(\mathbb{R}^n)} = O(\lambda^\frac{s-1}{2}) \quad\quad \text{ as } \lambda\rightarrow\infty
\end{equation}
for any $s\geq 0$. Indeed, $r$ is the unique outgoing solution to~\eqref{eq:scatter}, see Lemma~\ref{thm:resolest} in the appendix for the construction and property of $r$.

Consider dimension $n\geq 2$ and let $\theta,\omega\in\mathbb{R}^n$ be two vectors such that $\theta\perp\omega$. We take the following solutions:
\begin{align*}
\phi(x) := & \phi_0(x) + r_1(x;\lambda), \quad\quad \phi_0(x) := e^{i (k\theta + l \omega)\cdot x} \\
\psi(x) := & \psi_0(x) + r_2(x;\lambda), \quad\quad \psi_0(x) := e^{i (k\theta - l \omega)\cdot x}
\end{align*}
where $r_1, r_2$ satisfy~\eqref{eq:rdecay}.
Choose $k^2+l^2=\lambda$ so that $(\Delta+\lambda) \phi_0 = (\Delta+\lambda) \psi_0 = 0$.
Proposition~\ref{thm:control} asserts that there are $f,h \in C^\infty_c((0,T]\times\partial\Omega)$ such that
\begin{equation} \label{eq:normal2}
u^f_0(T) =\phi = \phi_0 + r_1, \quad\quad  u^h_0(T) = \psi = \psi_0 + r_2.
\end{equation}

\bigskip
\begin{thm} \label{thm:recon2}
Suppose $c_0= 1$, $q_0\in C^\infty(\overline{\Omega})$ and $q_0$ is not identically zero. Then the Fourier transform $\hat{\dot{q}}$ can be reconstructed as follows:
$$
\hat{\dot{q}}(2k\theta) = - \lim_{l\rightarrow\infty} \left[ (\partial^2_t f + (k^2+l^2) f,\dot{K}h)_{L^2((0,T)\times\partial\Omega)} + (\dot{\Lambda} f(T), h(T))_{L^2(\partial\Omega)} \right].
$$
where $f,h \in C^\infty_c((0,T]\times\partial\Omega)$ are solutions to~\eqref{eq:normal2}, respectively.
\end{thm}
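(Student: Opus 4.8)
The plan is to apply the integral identity of Proposition~\ref{thm:id} with the choice $\lambda = k^2 + l^2$ and then pass to the limit $l \to \infty$, using the decay estimate \eqref{eq:rdecay} of the scattered waves to discard every term involving $r_1$ or $r_2$. First I would verify the hypotheses of Proposition~\ref{thm:id}. Since $\theta \perp \omega$ and $k^2 + l^2 = \lambda$, the plane waves satisfy $(\Delta + \lambda)\phi_0 = (\Delta + \lambda)\psi_0 = 0$, and by the defining property of the scattered waves the total waves $\phi = \phi_0 + r_1$ and $\psi = \psi_0 + r_2$ solve the perturbed Helmholtz equation $[\Delta + \lambda - q_0]\phi = [\Delta + \lambda - q_0]\psi = 0$ in $\Omega$ (recall $c_0 = 1$). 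Hence the controls $f, h$ of \eqref{eq:normal2}, which exist by Proposition~\ref{thm:control}, satisfy the vanishing condition \eqref{eq:fhvanish}, so \eqref{eq:keyid} applies with $u_0^f(T) = \phi$ and $u_0^h(T) = \psi$ and yields
\[
(\partial_t^2 f + \lambda f, \dot{K} h)_{L^2((0,T)\times\partial\Omega)} + (\dot{\Lambda} f(T), h(T))_{L^2(\partial\Omega)} = -(\dot{q}\, \phi, \psi)_{L^2(\Omega)}.
\]

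Next I would expand the right-hand side into a principal term and an error. Because \eqref{eq:keyid} extends from real to complex Neumann data by bilinearity, the $L^2(\Omega)$ pairing here is the non-conjugated one $\int_\Omega a b\,dx$, and the choice of transverse frequencies makes the phases add, $\phi_0 \psi_0 = e^{2ik\theta\cdot x}$. Thus, extending $\dot{q}$ by zero outside $\Omega$, the principal term is
\[
(\dot{q}\, \phi_0, \psi_0)_{L^2(\Omega)} = \int_\Omega \dot{q}(x)\, e^{2ik\theta\cdot x}\,dx = \hat{\dot{q}}(2k\theta).
\]
The three remaining terms $(\dot{q}\,\phi_0, r_2)$, $(\dot{q}\, r_1, \psi_0)$, and $(\dot{q}\, r_1, r_2)$ constitute the error. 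Using $\dot{q} \in L^\infty(\Omega)$, the unimodularity $|\phi_0| = |\psi_0| = 1$ on $\Omega$ (so $\|\phi_0\|_{L^2(\Omega)} = \|\psi_0\|_{L^2(\Omega)} = |\Omega|^{1/2}$ is independent of $l$), and Cauchy--Schwarz, I would bound the first two by $O(\|r_i\|_{L^2(\Omega)})$ and the third by $O(\|r_1\|_{L^2(\Omega)}\|r_2\|_{L^2(\Omega)})$.

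Finally I would invoke \eqref{eq:rdecay} with $s = 0$, which gives $\|r_i\|_{L^2(\mathbb{R}^n)} = O(\lambda^{-1/2})$. Since $k$ and $\theta$ are held fixed, $\lambda = k^2 + l^2 \to \infty$ precisely as $l \to \infty$, so all three error terms vanish in this limit and $(\dot{q}\,\phi,\psi)_{L^2(\Omega)} \to \hat{\dot{q}}(2k\theta)$. Combining this with the displayed identity and negating gives the stated formula; letting $k$ and $\theta$ vary then recovers $\hat{\dot{q}}$ on all of $\mathbb{R}^n$. I expect the only genuine subtlety to be the suppression of the remainders: the single-plane-wave construction of Case~1 cannot be reused because the perturbed Helmholtz equation has no exact plane-wave solutions, and the whole purpose of splitting the frequency as $k\theta \pm l\omega$ with $\theta \perp \omega$ is to keep the recovered frequency $2k\theta$ fixed while driving $\lambda \to \infty$ so that the scattering correction is killed through the resolvent decay \eqref{eq:rdecay}.
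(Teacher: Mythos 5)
Your proposal is correct and follows essentially the same route as the paper: insert the total waves $\phi=\phi_0+r_1$, $\psi=\psi_0+r_2$ into the identity \eqref{eq:keyid} with $\lambda=k^2+l^2$, isolate the principal term $\hat{\dot q}(2k\theta)$ coming from $\phi_0\psi_0=e^{2ik\theta\cdot x}$, and kill the three remainder terms as $l\to\infty$ via the $s=0$ case of the resolvent decay \eqref{eq:rdecay}. The only difference is that you spell out details the paper leaves implicit (the bilinear reading of the $L^2(\Omega)$ pairing and the Cauchy--Schwarz bounds on the error terms), which is a faithful elaboration rather than a different argument.
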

\begin{proof}
Inserting $u^f_0(T)=\phi$ and $u^h_0(T)=\psi$ into~\eqref{eq:keyid} gives
\begin{align}
 & - \hat{\dot{q}}(2k\theta) - (\dot{q} e^{i (k \theta+ l\omega)\cdot x}, r_2)_{L^2(\Omega)} 
- (\dot{q} e^{i (k \theta - l\omega)\cdot x}, r_1)_{L^2(\Omega)} 
- (\dot{q} r_1, r_2)_{L^2(\Omega)} \nonumber \\
=&  (\partial^2_t f + (k^2 + l^2),\dot{K}h)_{L^2((0,T)\times\partial\Omega)} + (\dot{\Lambda} f(T), h(T))_{L^2(\partial\Omega)} \label{eq:est1}
\end{align}
If we fix $k$ and let $l\rightarrow\infty$, then $\lambda\rightarrow\infty$ and $\|r\|_{L^2(\Omega)}\rightarrow 0$ due to its decay property~\eqref{eq:rdecay}. We obtain the reconstruction formula for any $k\geq 0$ and any $\theta\in\mathbb{S}^{n-1}$. Note that $f$ and $h$ depend on $l$.
\end{proof}

\bigskip

We can also obtain a H\"{o}lder-type stability estimate for $\|\dot{q}\|_{H^{-s}(\mathbb{R}^n)}$, where $s>0$ is an arbitrary real number and $H^{-s}(\mathbb{R}^n)$ is the $L^2$-based Sobolev space of order $-s$ over $\mathbb{R}^n$.

\begin{thm} \label{thm:stab2}
Suppose $c_0= 1$, $q_0\in C^\infty(\overline{\Omega})$ and $q_0$ is not identically zero. 
For any $s>0$, there exists a constant $C>0$ such that 
$$
\|\dot{q}\|_{H^{-s}(\mathbb{R}^n)} \leq C \left(
   \|\dot{K}\|_{L^2((0,T)\times\partial\Omega) \rightarrow L^2((0,T)\times\partial\Omega)} + 
   \|\dot{\Lambda}\|_{H^2((0,T)\times\partial\Omega) \rightarrow H^3((0,T)\times\partial\Omega)}
   \right)^{\frac{2s}{11(n+2s)}}.
$$
\end{thm}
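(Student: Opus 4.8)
The plan is to avoid passing to the limit $l\to\infty$ in the reconstruction formula of Theorem~\ref{thm:recon2}, and instead keep $\lambda=k^2+l^2$ finite while treating the scattered waves $r_1,r_2$ as a controlled error. Writing $E:=\|\dot K\|_{L^2((0,T)\times\partial\Omega)\to L^2((0,T)\times\partial\Omega)}+\|\dot\Lambda\|_{H^2((0,T)\times\partial\Omega)\to H^3((0,T)\times\partial\Omega)}$ for the data norm, and making explicit the a priori class $\supp\dot q\subset\overline\Omega$, $\|\dot q\|_{L^\infty(\Omega)}\le M$ (on which the constant $C$ is allowed to depend), I would start from the rearranged identity~\eqref{eq:est1}, which yields the pointwise bound
\begin{align*}
|\hat{\dot q}(2k\theta)| \le{}& |(\partial_t^2 f+\lambda f,\dot K h)|+|(\dot\Lambda f(T),h(T))| \\
&{}+|(\dot q\, e^{i(k\theta+l\omega)\cdot x},r_2)|+|(\dot q\, e^{i(k\theta-l\omega)\cdot x},r_1)|+|(\dot q\, r_1,r_2)|.
\end{align*}

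For the first two (data) terms I would argue exactly as in Theorem~\ref{thm:stab1}: Cauchy--Schwarz, the bound $\|\partial_t^2 f+\lambda f\|_{L^2}\le(1+\lambda)\|f\|_{H^2}$, the trace theorem, and the boundedness of $\dot\Lambda:H^2\to H^3$ (whose proof in Theorem~\ref{thm:stab1} uses only $q_0\in C^\infty$ and hence carries over) give the data term $\le C(1+\lambda)\|f\|_{H^2}\|h\|_{H^2}\,E$. The control estimate~\eqref{control_estimate} gives $\|f\|_{H^2}\le C\|\phi\|_{H^4(\Omega)}$ and $\|h\|_{H^2}\le C\|\psi\|_{H^4(\Omega)}$; since $\theta\perp\omega$ forces $|k\theta\pm l\omega|^2=\lambda$, we have $\|\phi_0\|_{H^4},\|\psi_0\|_{H^4}\le C\lambda^2$, while $\|r_1\|_{H^4},\|r_2\|_{H^4}=O(\lambda^{3/2})$ by~\eqref{eq:rdecay} with $s=4$, so $\|f\|_{H^2}\|h\|_{H^2}\le C\lambda^4$ and the data term is $\le C\lambda^5 E$. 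For the three scattered-wave corrections, Cauchy--Schwarz with $\|e^{i(\cdot)\cdot x}\|_{L^\infty}=1$, the a priori bound on $\dot q$, and $\|r_j\|_{L^2(\Omega)}=O(\lambda^{-1/2})$ (from~\eqref{eq:rdecay} with $s=0$) give a total error $\le C\lambda^{-1/2}$. Combining, $|\hat{\dot q}(2k\theta)|\le C(\lambda^5 E+\lambda^{-1/2})$ for every admissible $\lambda\ge k^2$.

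To convert this into the $H^{-s}$ estimate I would fix a cutoff $\rho$ and split $\|\dot q\|_{H^{-s}}^2=\int(1+|\xi|^2)^{-s}|\hat{\dot q}|^2\,d\xi$ into $|\xi|\le\rho$ and $|\xi|>\rho$. On the tail the weight is $\le C\rho^{-2s}$ and $\int|\hat{\dot q}|^2=\|\dot q\|_{L^2}^2\le CM^2$, giving $\le C\rho^{-2s}$. On $|\xi|\le\rho$ I would fix one value $\lambda\ge\rho^2/4\ge k^2$ valid for all $|\xi|=2k\le\rho$, square the pointwise bound, and use $\int_{|\xi|\le\rho}d\xi\le C\rho^n$ to get $\le C\rho^n(\lambda^{10}E^2+\lambda^{-1})$. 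Hence $\|\dot q\|_{H^{-s}}^2\le C(\rho^n\lambda^{10}E^2+\rho^n\lambda^{-1}+\rho^{-2s})$. Balancing the last two terms by $\lambda=\rho^{n+2s}$ turns both into $\rho^{-2s}$ and the first into $\rho^{11n+20s}E^2$; balancing that against $\rho^{-2s}$ forces $\rho^{11(n+2s)}=E^{-2}$, i.e. $\rho=E^{-2/(11(n+2s))}$, whence $\|\dot q\|_{H^{-s}}^2\le CE^{4s/(11(n+2s))}$, which is the claim after taking square roots (for $E$ large the estimate is trivial from $\|\dot q\|_{H^{-s}}\le\|\dot q\|_{L^2}\le M$ by enlarging $C$).

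I expect the main obstacle to be the bookkeeping that pins down the exponent: the factor $11$ is fragile and comes from tracking the power $\lambda^5$ in the data term (a product of $(1+\lambda)$ with $\|\phi\|_{H^4}\|\psi\|_{H^4}\sim\lambda^4$) against the $\lambda^{-1/2}$ decay of the scattered waves and the $\lambda=\rho^{n+2s}$ coupling dictated by the tail/error balance. A secondary but essential subtlety is making the a priori class of $\dot q$ explicit, since neither the correction terms nor the high-frequency tail can be controlled without the bound $\|\dot q\|_{L^\infty}\le M$; and one must confirm that $\dot\Lambda:H^2\to H^3$ stays bounded in the present case $q_0\neq0$, which it does since the regularity argument of Theorem~\ref{thm:stab1} only requires $q_0\in C^\infty(\overline\Omega)$.
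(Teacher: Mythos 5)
Your proposal is correct and follows essentially the same route as the paper: the identity \eqref{eq:est1} gives the pointwise bound $|\hat{\dot{q}}(2k\theta)|\le C(\lambda^{5}E+\lambda^{-1/2})$, and the low/high frequency splitting with the balance $\lambda=\rho^{n+2s}$, $\rho=E^{-2/(11(n+2s))}$ yields the claim. If anything, your choice of tuning $\lambda$ directly (rather than $l=\rho^{n+2s}$, as the paper literally writes) is the bookkeeping that actually produces the stated exponent $\tfrac{2s}{11(n+2s)}$, and your explicit insistence on the a priori bound for $\dot{q}$ is a point the paper leaves implicit.
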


\begin{proof}
Write $\xi:=2k\theta$ and 
$$
\delta := \left(
   \|\dot{K}\|_{L^2((0,T)\times\partial\Omega) \rightarrow L^2((0,T)\times\partial\Omega)} + 
   \|\dot{\Lambda}\|_{H^2((0,T)\times\partial\Omega) \rightarrow H^3((0,T)\times\partial\Omega)}
   \right).
$$
Let $\rho>0$ be a sufficiently large number that is to be determined. We decompose
$$
\|\dot{q}\|^2_{H^{-s}(\mathbb{R}^n)} = 
\int_{|\xi| \leq \rho} \frac{|\hat{\dot{q}}(\xi)|^2}{(1+ |\xi|^2)^{s}} \, d\xi
+ \int_{|\xi| > \rho} \frac{|\hat{\dot{q}}(\xi)|^2}{(1+ |\xi|^2)^{s}} \, d\xi.
$$
For the integral over high frequencies, we have
$$
\int_{|\xi| > \rho} \frac{|\hat{\dot{q}}(\xi)|^2}{(1+ |\xi|^2)^{s}} \, d\xi
\leq \frac{1}{(1+ \rho^2)^{s}} \int_{|\xi| > \rho} |\hat{\dot{q}}(\xi)|^2 \, d\xi
\leq \frac{\|\dot{q}\|^2_{L^2(\mathbb{R}^n)}}{(1+\rho^{2})^s} \leq C \frac{1}{\rho^{2s}}.
$$
For the integral over low frequencies, it is easy to see that:
$$
\int_{|\xi| \leq \rho} \frac{|\hat{\dot{q}}(\xi)|^2}{(1+ |\xi|^2)^s} \, d\xi 
\leq \int_{|\xi| \leq \rho} |\hat{\dot{q}}(\xi)|^2 \, d\xi 
\leq C \rho^n \|\hat{\dot{q}}\|^2_{L^\infty(B(0,\rho))}. 
%\leq C \rho^n \left( \frac{1}{l^2} + (\rho^2+l^2)^{\frac{s}{2}}\delta \right) 
$$
The norm $\|\hat{\dot{q}}\|_{L^\infty(B(0,\rho))}$ can be estimated using~\eqref{eq:est1}. Indeed, for $|\xi|\leq \rho$, we have
\begin{align*}
|\hat{\dot{q}}(\xi)| & \leq |(\partial^2_t f + (k^2 + l^2),\dot{K}h)_{L^2((0,T)\times\partial\Omega)} + (\dot{\Lambda} f(T), h(T))_{L^2(\partial\Omega)}| + \frac{C}{\sqrt{\lambda}} \\
& \leq C (2+\lambda) \|\phi\|_{H^4(\Omega)} \|\psi\|_{H^4(\Omega)} \delta + \frac{C}{\sqrt{\lambda}} \\
& \leq C (2+\lambda)
\left( \|\phi_0\|_{H^4(\Omega)} + \|r_1\|_{H^4(\Omega)} \right)
\left( \|\psi_0\|_{H^4(\Omega)} + \|r_2\|_{H^4(\Omega)} \right) \delta + \frac{C}{\sqrt{\lambda}}  \\
& \leq C (2+\lambda) \left(
\lambda^2 + \lambda^{\frac{3}{2}} \right)^2 \delta + \frac{C}{\sqrt{\lambda}}
\end{align*}
where the first inequality is a consequence of the $L^2$-resolvent estimate for $r_1,r_2$, the second inequality follows from the proof of Proposition~\ref{thm:nopotential}, and the last inequality utilizes the resolvent estimate for higher-order derivatives of $r_1,r_2$.
Utilizing the relation $\lambda=k^2+l^2$, we conclude
$$
\|\hat{\dot{q}}\|^2_{L^\infty(B(0,\rho))}
\leq C \left[ (2+\lambda)^2 \lambda^6 (1+\sqrt{\lambda})^4 \delta^2 + \frac{1}{\lambda} \right]
\leq C \left[ (\rho^2+l^2)^{10} \delta + \frac{1}{l^2} \right]
$$
provided $\rho>0$ is sufficiently large.
Combining these estimates, we see that
$$
\|\dot{q}\|^2_{H^{-s}(\mathbb{R}^n)}
\leq C \left[ \rho^n (\rho^2+l^2)^{10} \delta^2 + \frac{\rho^n}{l^2} + \frac{1}{\rho^{2s}} \right].
$$
Choosing $l = \rho^{n+2s}$ and $\rho = \delta^{-\frac{2}{21(n+2s)}}$ yields
$$
\|\dot{q}\|^2_{H^{-s}(\mathbb{R}^n)} \leq C \delta^{\frac{4s}{11(n+2s)}} 
$$
as long as $\delta$ is sufficiently small.
% \bigskip\bigskip\bigskip\bigskip\bigskip\bigskip
% where the last inequality comes from~\eqref{eq:est1} (recall that the RHS of~\eqref{eq:est1} $\leq \lambda^{\frac{s}{2}} \delta$, see the remark). 
% Therefore,
% \begin{equation}
% \|\dot{q}\|^2_{H^{-1}(\mathbb{R}^n)} \leq C\left(
% \frac{1}{\rho^2} + \frac{\rho^n}{l^2}  + \rho^n (\rho^2+l^2)^{\frac{s}{2}}\delta \right).
% \end{equation}
% Choose $l=\rho^{\frac{n+2}{2}}$ and let $\rho\geq 1$, then
% \begin{equation}
% \|\dot{q}\|^2_{H^{-1}(\mathbb{R}^n)} \leq C \left(
% \frac{2}{\rho^2} + \rho^n (2\rho^{n+2})^{\frac{s}{2}} \delta \right)
% \leq \left( \frac{1}{\rho^2} + \rho^{\frac{(n+2)s}{2}+n} \delta \right)
% \end{equation}
% Choose $\rho = \delta^{-\frac{2}{(n+2)s+(2n+4)}}$ to obtain
% \begin{equation}
% \|\dot{q}\|^2_{H^{-1}(\mathbb{R}^n)} \leq C \delta^{\frac{4}{(n+2)s+(2n+4)}}.
% \end{equation}

\end{proof}

\section{Numerical Experiments} \label{sec:experiment}
% \HOX{{\color{blue}TODO: Reduce the noise levels in Fig. 12 to show the feasibility of the algorithm, then remove Experiment 3 and the remark.
% }}

This section is devoted to numerical implementation and validation of the reconstruction formula~\eqref{eq:reconformula} in one dimension (1D) when $c_0= 1$ and $q_0= 0$.

\subsection{Computing Boundary Controls with Time Reversal} \label{sec:BC}

A crucial step of the boundary control method is solving the equations~\eqref{eq:normal} or \eqref{eq:normal2} for the boundary controls $f$ and $h$. For our purpose, a highly accurate numerical solver is needed due to the appearance of $\partial^2_t f$ in~\eqref{eq:reconformula}, where the second-order temporal differentiation tends to amplify the numerical error in $f$.
%The rest of the proof just constructs special functions $\phi_0,\psi_0,\phi,\psi$ and exploits the relation~\eqref{eq:reconformula} to recover the Fourier transform of $\dot{q}$. 
%This procedure poses numerical challenges, as the second-order temporal differentiation $\partial^2_t$ in~\eqref{eq:reconformula} tends to amplify the numerical error caused by the reconstruction of $f$. Therefore, a highly-accurate numerical scheme is needed to solve the equation~\eqref{eq:normal}. 
When the spatial dimension $n$ is odd with $c_0= 1$ and $q_0= 0$, such scheme can be obtained using time reversal. Indeed, for a prescribed $\phi$ on $\Omega$, one can construct an extension, named $\tilde{\phi}$, such that $\tilde{\phi}=\phi$ in $\Omega$ and $\tilde{\phi}$ is supported in a neighborhood of $\Omega$. Let $v$ be the solution of the following backward initial value problem
$$
\left\{
\begin{array}{rcl}
\Box_{1,0} v(t,x) & = 0, &\quad \text{ in } (0,T) \times \mathbb{R}^n \\
v(T) & = \tilde\phi, &\quad \text{ in } \mathbb{R}^n \\
\partial_t v(T) & = 0, &\quad \text{ in } \mathbb{R}^n.
\end{array}
\right.
$$

% \begin{align*}
%   \partial^2_t v(t,x) - \Delta v(t,x) & = 0 \quad \text{ in } (0,T)\times\mathbb{R}^n\\
%   v(T) = \tilde\phi, \quad\quad  \partial_t v(T) & = 0.
% \end{align*}
If $T>0$ is sufficiently large and $n$ is odd, we would have $v(0)=\partial_t v(0)=0$ by the Huygen's principle. This implies 
$$
 u_0^{f-\partial_\nu v}(T) = u^f_0(T) - v(T) = 0 \text{ in } \Omega.
$$
As a result, we can take $f=\partial_\nu v|_{[0,T]\times\partial\Omega}$. Note that $v$ can be explictly expressed using the Kirchhoff's formula~\cite{evans1998partial}, thus $\partial^2_t f= \partial_\nu \partial^2_t v|_{[0,T]\times\partial\Omega}$ can be analytically computed.

We will demonstrate the numerical implementations in dimension $n=1$ with $\Omega = [a,b]$. In this case, D'Alembert's formula gives
$$
v(t,x) = \frac{1}{2} [\tilde\phi(x+t-T) + \tilde\phi(x+T-t)].
$$
Therefore,
$$
\partial^2_t f=  \partial_\nu \partial^2_t v|_{[0,T]\times\partial\Omega} = \pm\frac{1}{2} [\tilde\phi'''(x+t-T) + \tilde\phi'''(x+T-t)]|_{x=a,b}
$$
where we take $+$ when $x=b$ and $-$ when $x=a$. We choose the following extension:
$$
    \tilde\phi :=
    \begin{cases}
    \phi&x\in[a,b],\\
    \phi\cdot\exp\{1-\frac{1}{1-(x-a)^{2p}}\}&x\in(a-1,a),\\
    \phi\cdot\exp\{1-\frac{1}{1-(x-b)^{2p}}\}&x\in(b,b+1),\\
    0&x\notin(a-1,b+1),
    \end{cases}
$$
where $p\geq 1$ is a positive integer. It is easy to see that $\tilde\phi$ is $C^{2p-1}$ at $x=a,b$ and is $C^\infty$ at other points. We take $p\geq2$ to guarantee the existence of the second derivative of $f$.

%\bigskip\bigskip\bigskip
%%the forward grid size of $(0,2T)\times\Omega$ is $24999\times501$ and the inverse grid size is $12499\times251$. We set
%%\[c_0=1\qquad q_0=0\]
%and choose the 1D Helmholtz equation solution to be \[1,\sin\left(\frac{\pi}{2}x\right),\cos\left(\frac{\pi}{2}x\right),\dots,\sin\left(\frac{N\pi}{2}x\right),\cos\left(\frac{N\pi}{2}x\right).\]
%Notice that
%\begin{align*}
%    (f(x),\sin(n\pi x))_{L^2(\Omega)}&=2\left(f(x),\sin\left(\frac{n}{2}\pi x\right)\cos\left(\frac{n}{2}\pi x\right)\right)_{L^2(\Omega)},\\
%    (f(x),\cos(n\pi x))_{L^2(\Omega)}&=\left(f(x),\cos^2\left(\frac{n}{2}\pi x\right)\right)_{L^2(\Omega)}-\left(f(x),\sin^2\left(\frac{n}{2}\pi x\right)\right)_{L^2(\Omega)},
%\end{align*}
%then we can calculate the Fourier coefficients of $\dot{q}c_0^{-2}$. In this section we choose $N=10$.
%\bigskip\bigskip\bigskip
%

\subsection{Numerical Experiments}

The computational setup is as follows: $\Omega=[-1,1]$, $c_0=1$, $q_0=0$, and $T=5$. The forward problem~\eqref{eq:linearizedBVP} is solved using the second order central difference scheme on a temporal-spatial grid of size $24999\times501$ to obtain the linearized ND map~\eqref{eq:linearizedNDmap} . Then various $\phi=u^f_0(T)$ are inserted into~\eqref{eq:reconformula} to recover the Fourier transform of $\dot{q}$ at $2k\theta$, where $f$ and $h$ are computed using the time revesal method as is explained at the beginning of this section.
The basis functions for the prescribed Helmholtz solution $\phi$ in our experiments are
\[1,\sin\left(\frac{\pi}{2}x\right),\cos\left(\frac{\pi}{2}x\right),\dots,\sin\left(\frac{N\pi}{2}x\right),\cos\left(\frac{N\pi}{2}x\right)\]
with $N=10$. They correspond to Helmholtz solutions with $k=0, \frac{\pi}{2},\dots, \frac{N \pi}{2}$.

\bigskip
\textbf{Experiment 1.}
The first experiment aims to reconstruct the following smooth $\dot{q}$ using the formula~\eqref{eq:reconformula}:
\[\dot{q}=\sin(\pi x)+2\cos(2\pi x)+4\sin(4\pi x)-3.\]
The graph of $\dot{q}$ is shown in Figure~\ref{fig:c1}. The measurement $\dot{\Lambda}_{\dot{q}}$ is added with $0\%,1\%$, and $5\%$ of Gaussian noise, respectively. The reconstructions and the corresponding errors are illustrated in Figure~\ref{fig:exp1}. 
Notice that the reconstruction error with $5\%$ noise is relatively larger, as can be expected. When multiple measurements are available, we can repeat the reconstruction several times and then take the average. This strategy effectively reduces the error, since the inverse problem is linear and the Gaussian noise have zero mean, see Figure~\ref{fig:repeat1}.

\begin{figure}[!htb]
    \centering
   \includegraphics[width=0.42\textwidth]{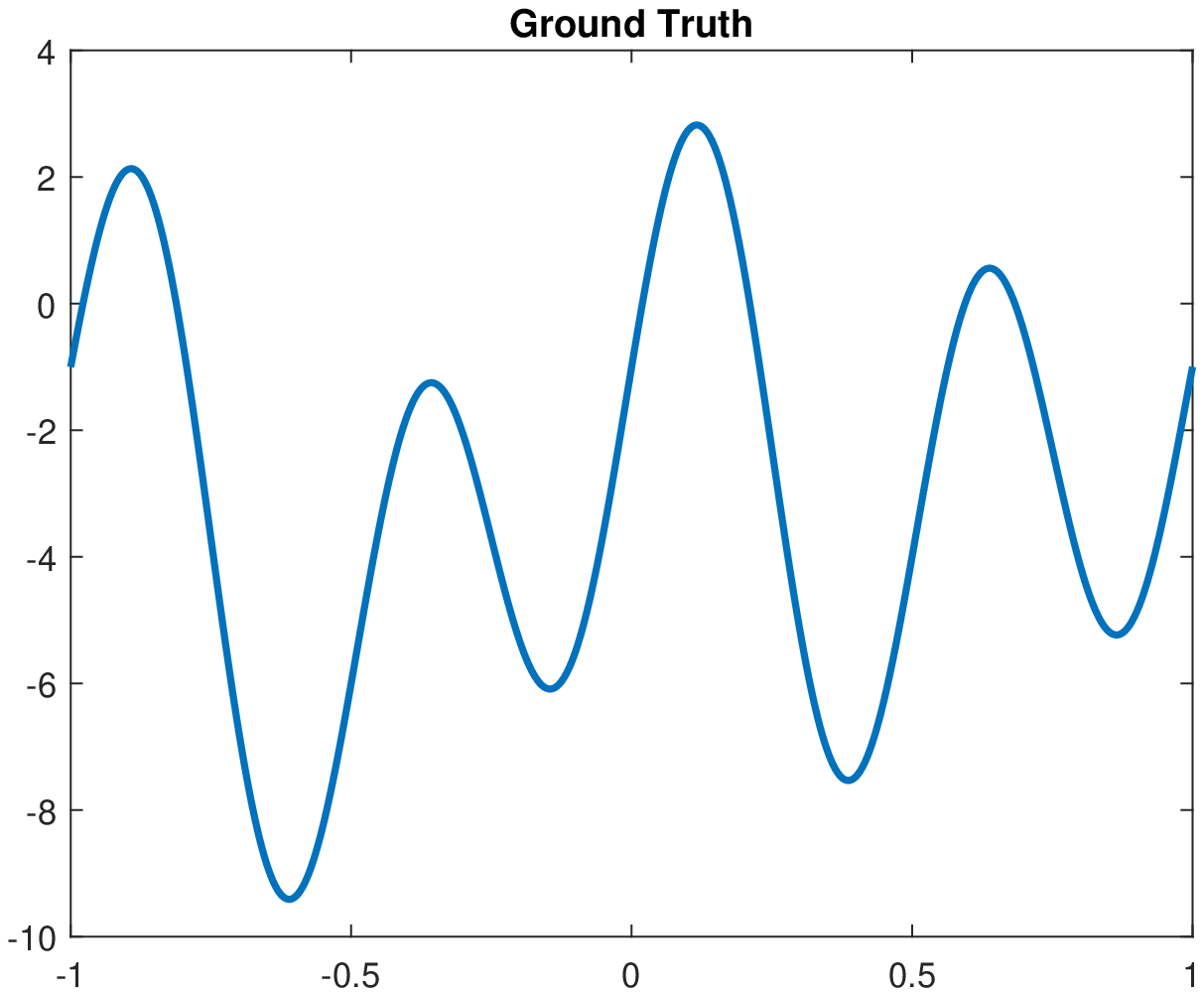}
    \caption{%Left: 
    Ground truth $\dot{q}=\sin(\pi x)+2\cos(2\pi x)+4\sin(4\pi x)-3$. %Right: Ground Truth $\dot{q}=H(x)$ and its projection $H_N(x)$.
    }
    \label{fig:c1}
% \end{figure}
\bigskip
% \begin{figure}[!htb]
%     \centering
    \includegraphics[width=0.42\textwidth]{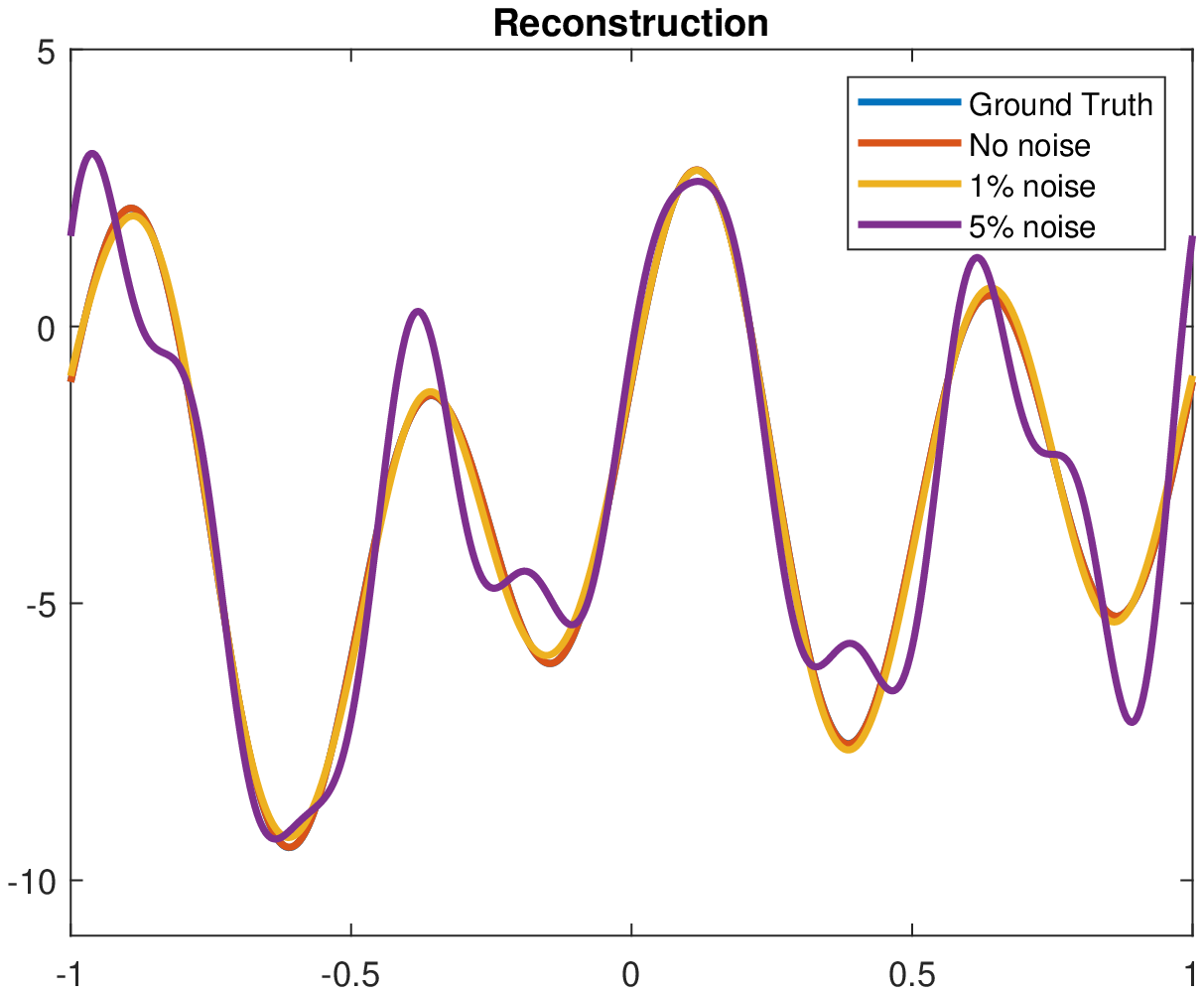}
    \includegraphics[width=0.42\textwidth]{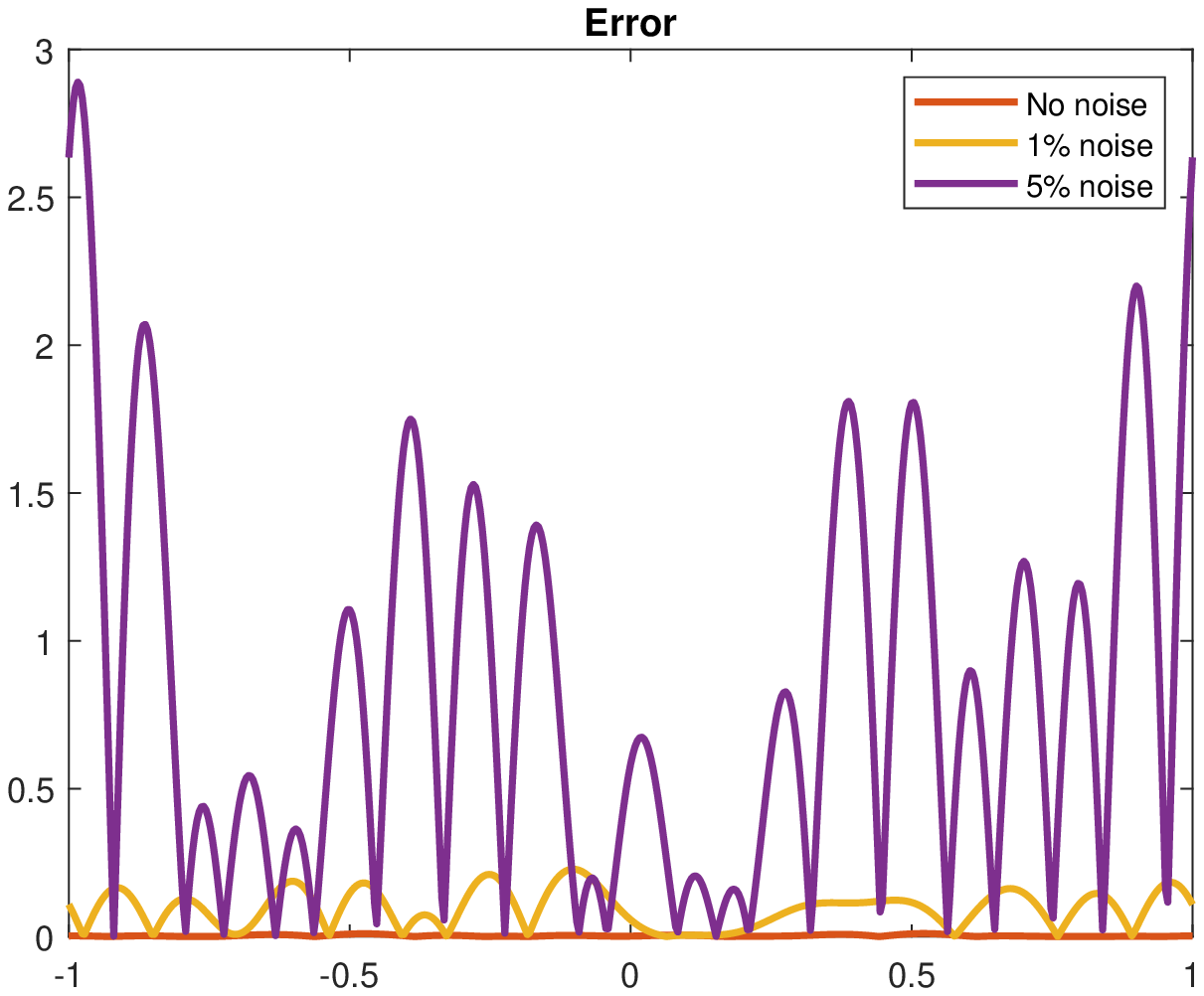}
    \caption{Left: Reconstructed $\dot{q}$ with $0\%,1\%,5\%$ Gaussian noise and the ground truth. Right: The corresponding error between the reconstruction and the ground truth. The relative $L^2$-errors are $0.1\%$, $2.5\%$, and $23.9\%$ %$0.0881\%,2.5411\%$, and $23.9049\%$, 
    respectively}
    \label{fig:exp1}
% \end{figure}
\bigskip
% \begin{figure}[!htb]
%     \centering
    \includegraphics[width=0.42\textwidth]{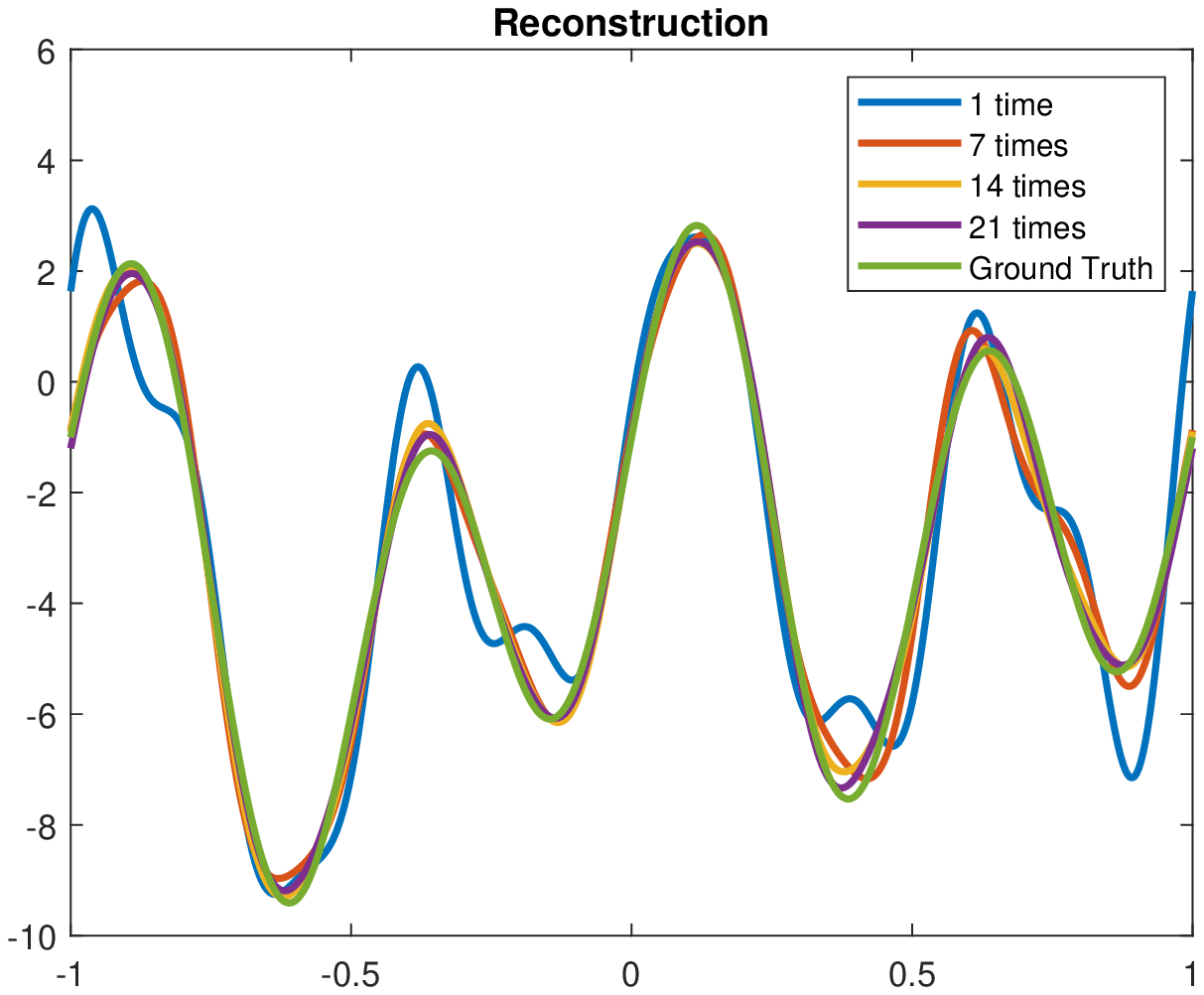}
    \includegraphics[width=0.42\textwidth]{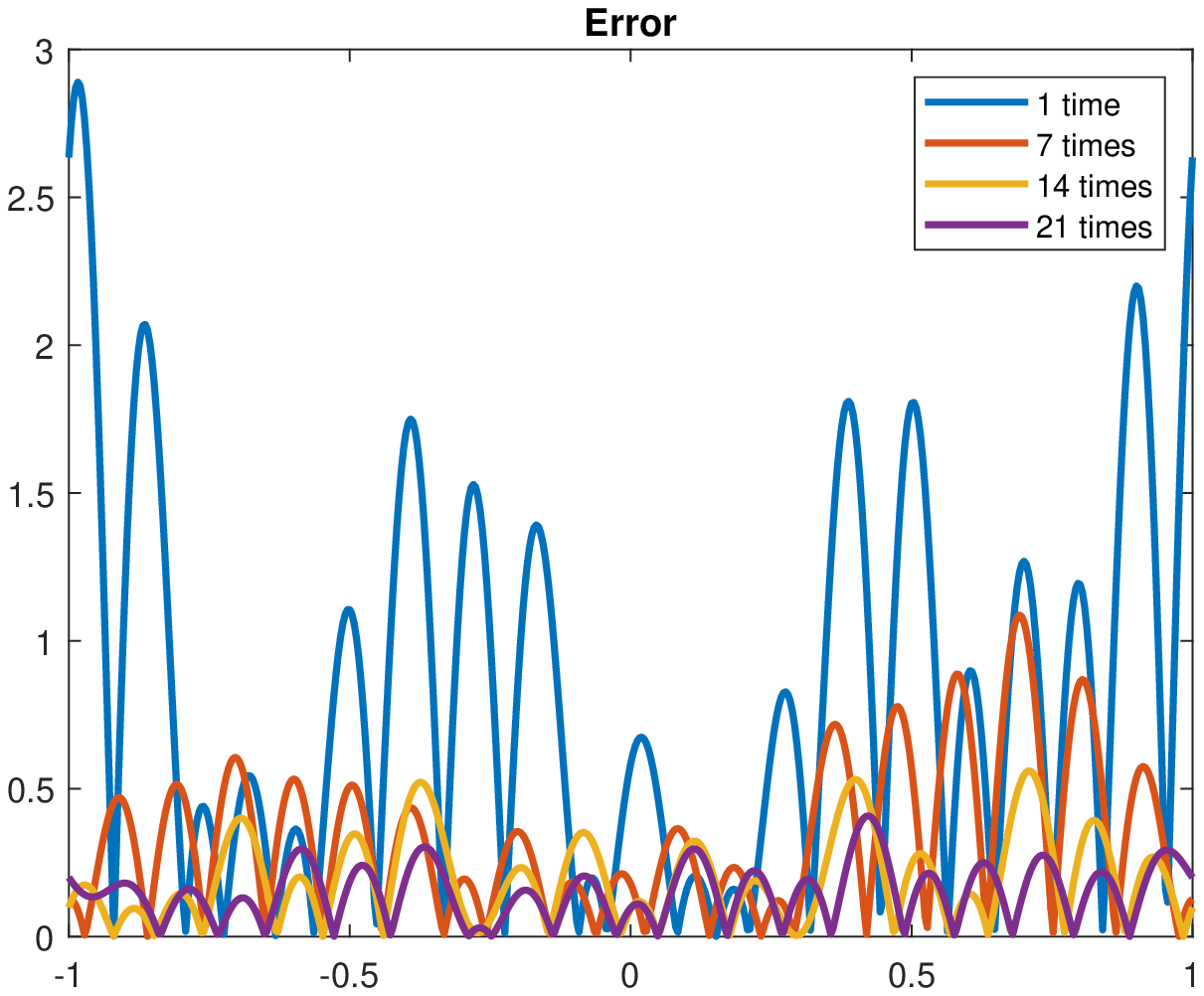}
    \caption{Left: Reconstructed $\dot{q}$ under $1,7,14,21$ times repetition with $5\%$ Gaussian noise and the ground truth. Right: The corresponding error between the reconstruction and the ground truth. 
    The relative $L^2$-errors are $23.9\%$, $9.4\%$, and $5.5\%$, respectively.
    %The relative $L^2$-errors are $23.9049\%,9.3509\%,5.4680\%$, and $3.9684\%$, respectively
    }
    \label{fig:repeat1}
\end{figure}

%\subsection{Experiment 2}

\bigskip
\textbf{Experiment 2.}
The second experiment tests reconstruction of a discontinuous $\dot{q}$. we choose $\dot{q}$ to be the Heaviside function
$$
    H(x)=\begin{cases}
    1&x\geq0,\\
    0&x<0.
    \end{cases}
$$
The Fourier series of $H(x)$ on $\Omega=[-1,1]$ is
$$
    H(x)=\frac{1}{2}+\sum_{n=1}^\infty\frac{2}{(2n-1)\pi}\sin((2n-1)\pi x).
$$
With the choice of the finite computational basis, we can only expect to reconstruct the following orthogonal projection:
$$
    H_{N}(x):=\frac{1}{2}+\sum_{n=1}^{\left\lceil\frac{N}{2}\right\rceil}\frac{2}{(2n-1)\pi}\sin((2n-1)\pi x),
$$
see Figure~\ref{fig:c2} for the graph of $H(x)$ and $H_N(x)$. The reconstruction formula~\eqref{eq:reconformula} is implemented with $0\%,1\%$, and $5\%$ of Gaussian noise added to $\dot{\Lambda}_{\dot{q}}$, respectively. The reconstructions and corresponding errors with a single measurement are illustrated in Figure~\ref{fig:exp2}. The averaged reconstruction with $5\%$ of Gaussian noise and multiple repeated measurements are illustrated in Figure~\ref{fig:repeat2}.

\begin{figure}[!htb]
    \centering
    \includegraphics[width=0.42\textwidth]{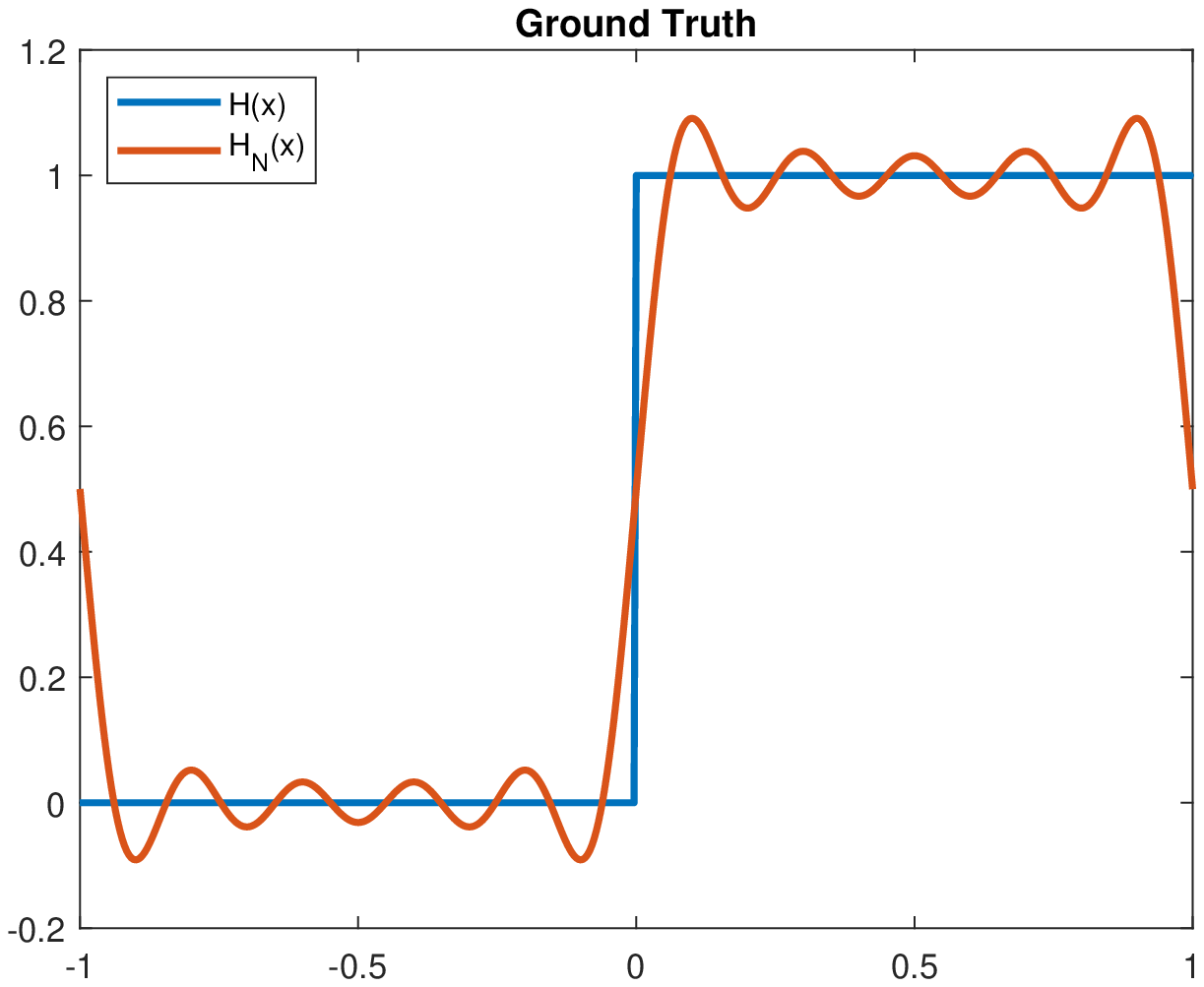}
    \caption{%Left: Ground truth $\dot{q}=\sin(\pi x)+2\cos(2\pi x)+4\sin(4\pi x)-3$. %Right: 
    Ground Truth $\dot{q}=H(x)$ and its projection $H_N(x)$.
    }
    \label{fig:c2}
% \end{figure}
\bigskip
% \begin{figure}[!htb]
%     \centering
    \includegraphics[width=0.42\textwidth]{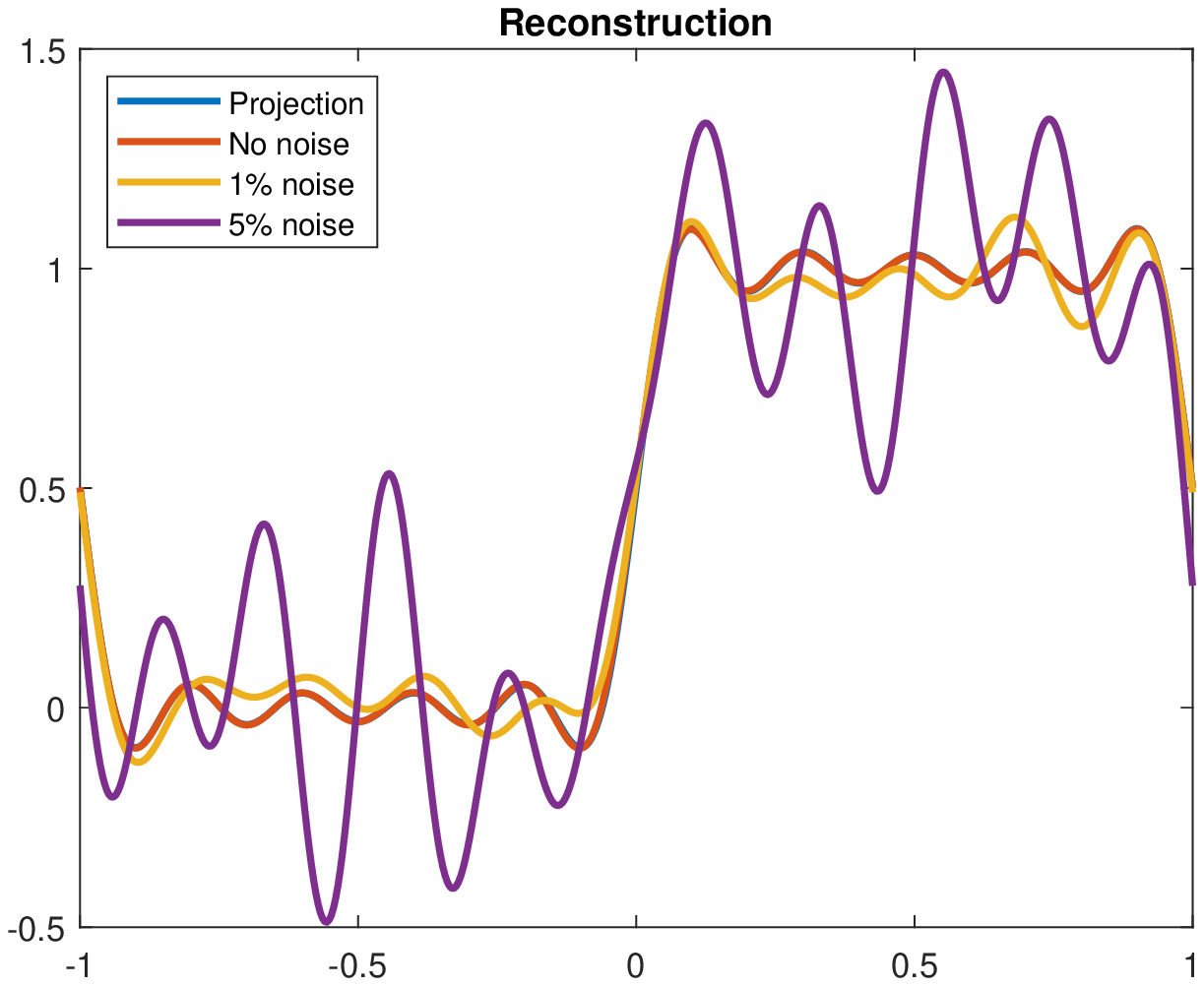}
    \includegraphics[width=0.42\textwidth]{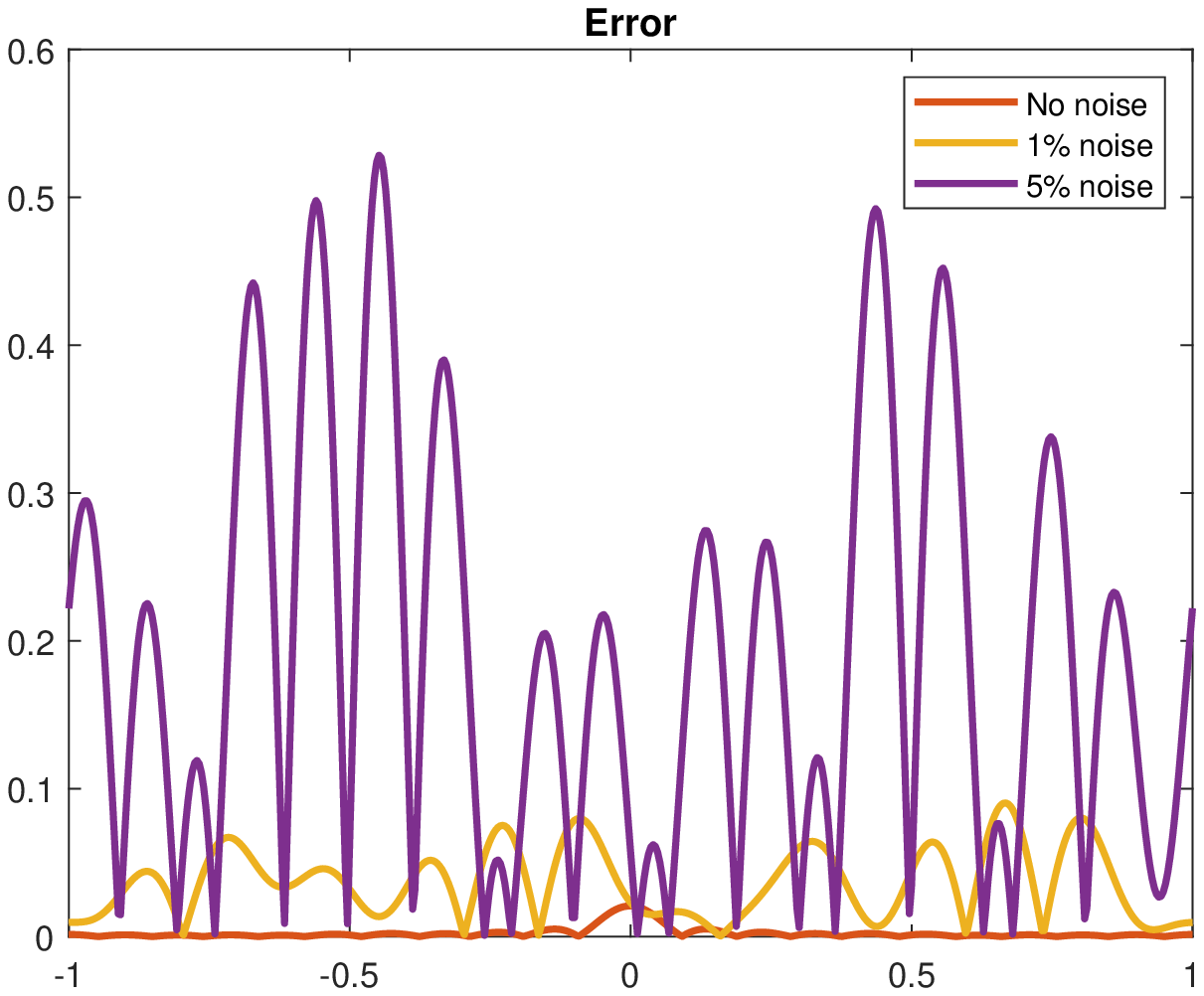}
    \caption{Left: Reconstructed $\dot{q}$ with $0\%,1\%,5\%$ Gaussian noise and the projection of the ground truth. Right: The corresponding error between the reconstruction and the projection of the ground truth. 
    The relative $L^2$-errors are $0.6\%$, $6.2\%$, and $33.8\%$, respectively.
    %The relative $L^2$-errors are $0.6475\%,6.1777\%$, and $33.7609\%$, respectively
    }
    \label{fig:exp2}
% \end{figure}
\bigskip
% \begin{figure}[!htb]
%     \centering
    \includegraphics[width=0.42\textwidth]{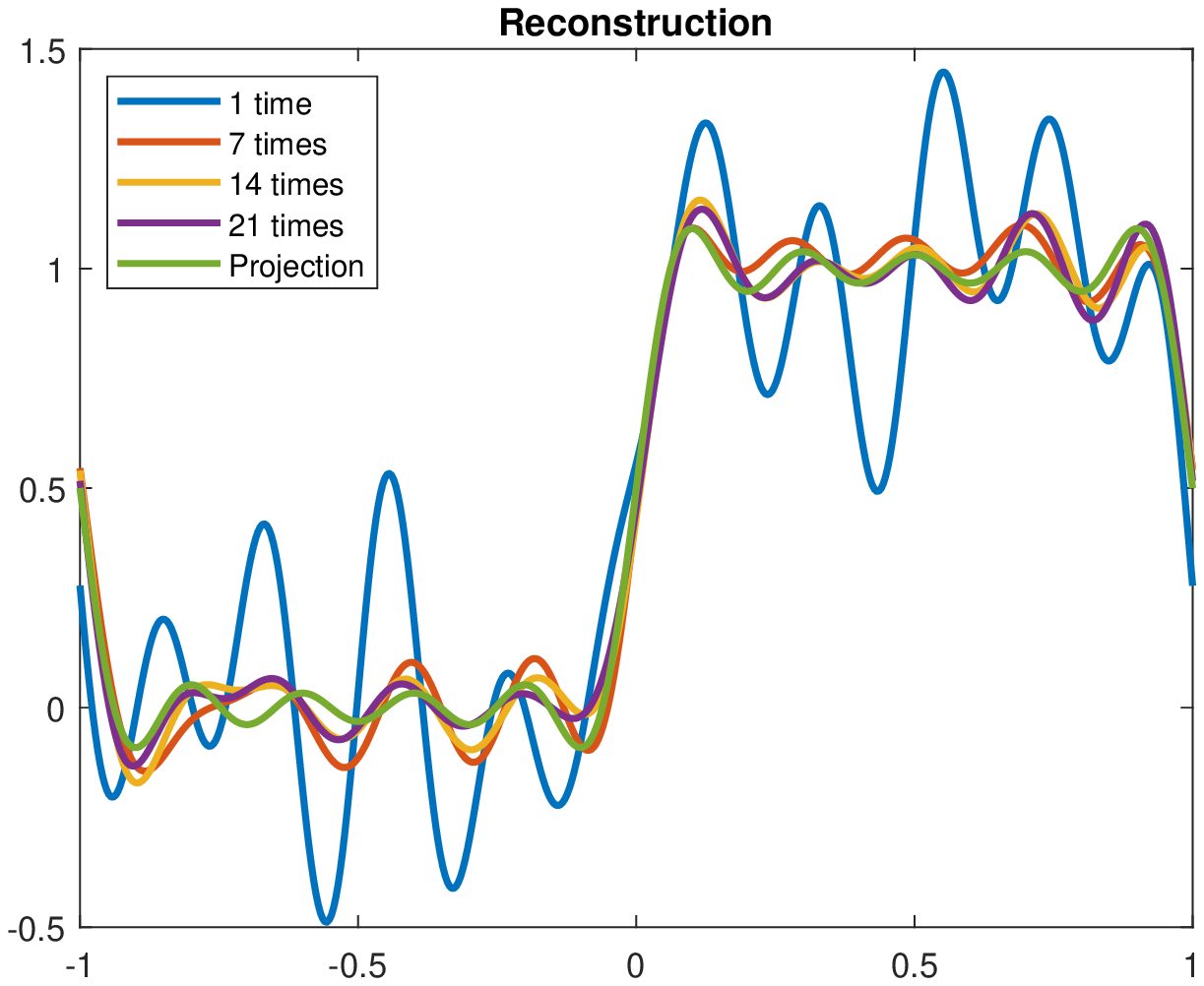}
    \includegraphics[width=0.42\textwidth]{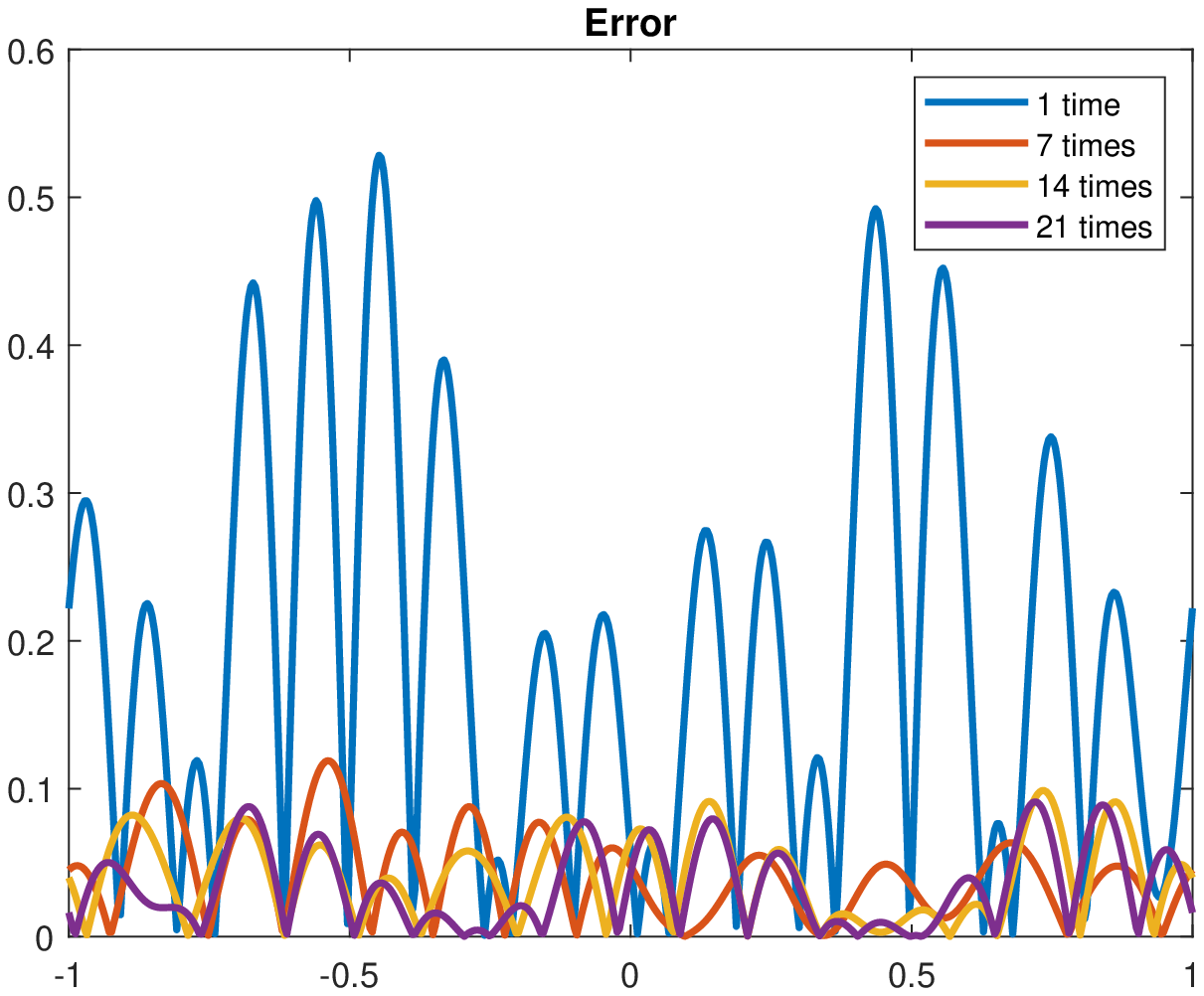}
    \caption{Left: Reconstructed $\dot{q}$ under $1,7,14,21$ times repetition with $5\%$ Gaussian noise and the projection of the ground truth. Right: The corresponding error between the reconstruction and the projection of the ground truth. 
    The relative $L^2$-errors are $33.8\%$, $7.2\%$, and $6.9\%$, respectively.
    %The relative $L^2$-errors are $33.7609\%,7.2027\%,6.9549\%$, and $6.1314\%$, respectively
    }
    \label{fig:repeat2}
\end{figure}

%\subsection{Experiment 3}

\bigskip
\textbf{Experiment 3.}
In this experiment, we apply the reconstruction formula~\eqref{eq:reconformula} to measurement from the non-linear IBVP. Specifically, we choose $c_0= 1$ and the potential
\[q=q_0+\varepsilon\dot{q}+\varepsilon^2 \ddot{q}\]
where $\epsilon>0$ is a small number, the background potential $q_0= 0$, and the perturbations
\[\dot{q}=\sin(\pi x)+2\cos(2\pi x)+4\sin(4\pi x)-3,\qquad\ddot{q}=20\cos(20\pi x).\]
We apply the reconstruction formula~\eqref{eq:reconformula}, with $\dot{\Lambda}_{\dot{q}} f$ replaced by $\Lambda_q f - \Lambda_{q_0} f$ (where $\Lambda_q f$ and $\Lambda_{q_0} f$ are computed by solving the boundary value problem~\eqref{eq:bvp}), and then add $q_0$ to the reconstruction to obtain an approximation of $q$. 
The rationale is that, when $\epsilon$ is small, we have the following approximation
\[\Lambda_q-\Lambda_{q_0}\approx\varepsilon\dot{\Lambda}_{\dot{q}} = \dot{\Lambda}_{\epsilon\dot{q}}.\]
Therefore, the resulting reconstruction from~\eqref{eq:reconformula} is approximately $\epsilon\dot{q}$, and adding $q_0$ to it yields a linear approximation of $q$. 
We choose $\epsilon=0.1$. The ground truth is illustrated in Figure~\ref{fig:c5}.

Adding noise is a bit delicate in this experiment. The noise to be added is additive and proportional to the magnitude of the signal.
We tested two ways of adding noise: (1) adding noise to the difference $(\Lambda_q - \Lambda_{q_0})f$; (2) adding noises to $\Lambda_q f$ and $\Lambda_{q_0} f$ respectively, then subtract to find the difference. The resulting numerical performance are different, and it turns out the former introduces much less error than the latter.
This is because the numerical values in the discretization of $(\Lambda_q - \Lambda_{q_0})f$ are much smaller, hence the proportional noise is relatively small. In contrast, the numerical values in the discretization of $\Lambda_q f$ and $\Lambda_{q_0}f$ are larger, hence the proportional noise is relatively large. Comparison of the reconstruction errors are shown in Figure~\ref{fig:exp5} and Figure~\ref{fig:exp6}.
Note that the noise added in Figure~\ref{fig:exp6} (0.1\% and 0.5\%) is only a tenth of that in Figure~\ref{fig:exp5} (1\% and 5\%).

\begin{figure}[!htb]
    \centering
    \includegraphics[width=0.42\textwidth]{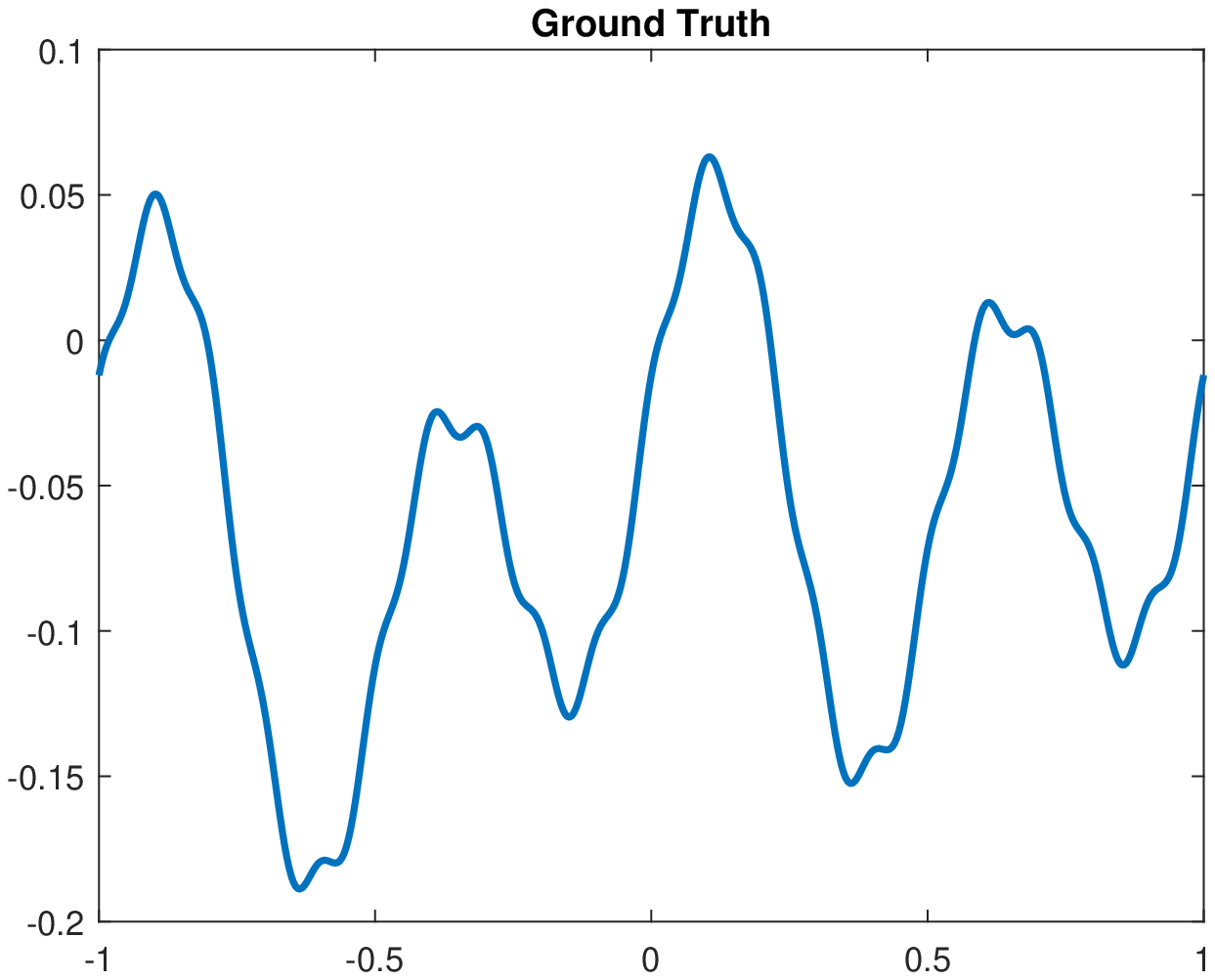}
    \caption{Ground truth $q=q_0+\varepsilon\dot{q}+\varepsilon^2\ddot{q}$, where $\dot{q}=\sin(\pi x)+2\cos(2\pi x)+4\sin(4\pi x)-3$, $\ddot{q}=20\cos(20\pi x)$, $\varepsilon=0.02$.}
    \label{fig:c5}
% \end{figure}
\bigskip
% \begin{figure}[!htb]
%     \centering
    \includegraphics[width=0.42\textwidth]{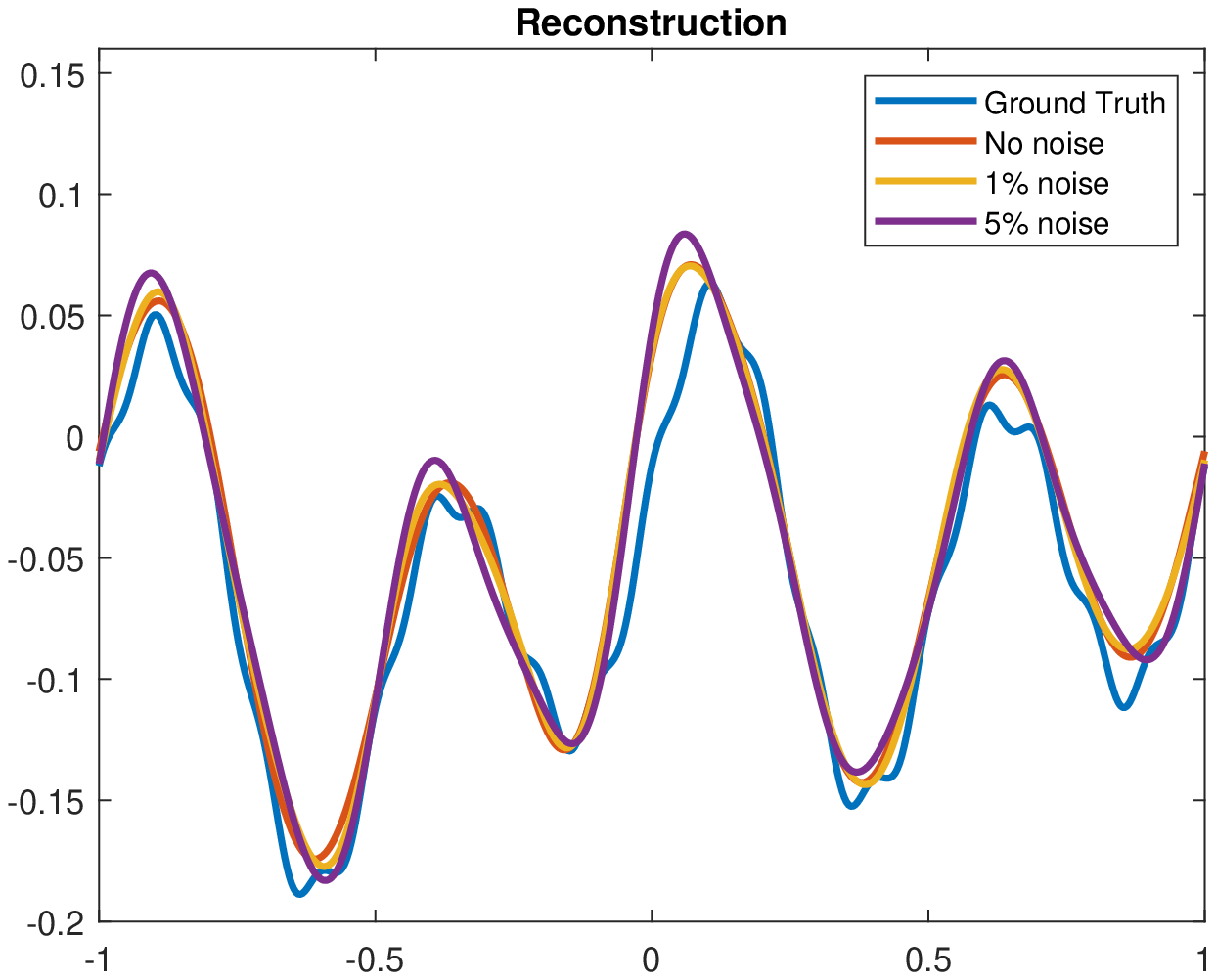}
    \includegraphics[width=0.42\textwidth]{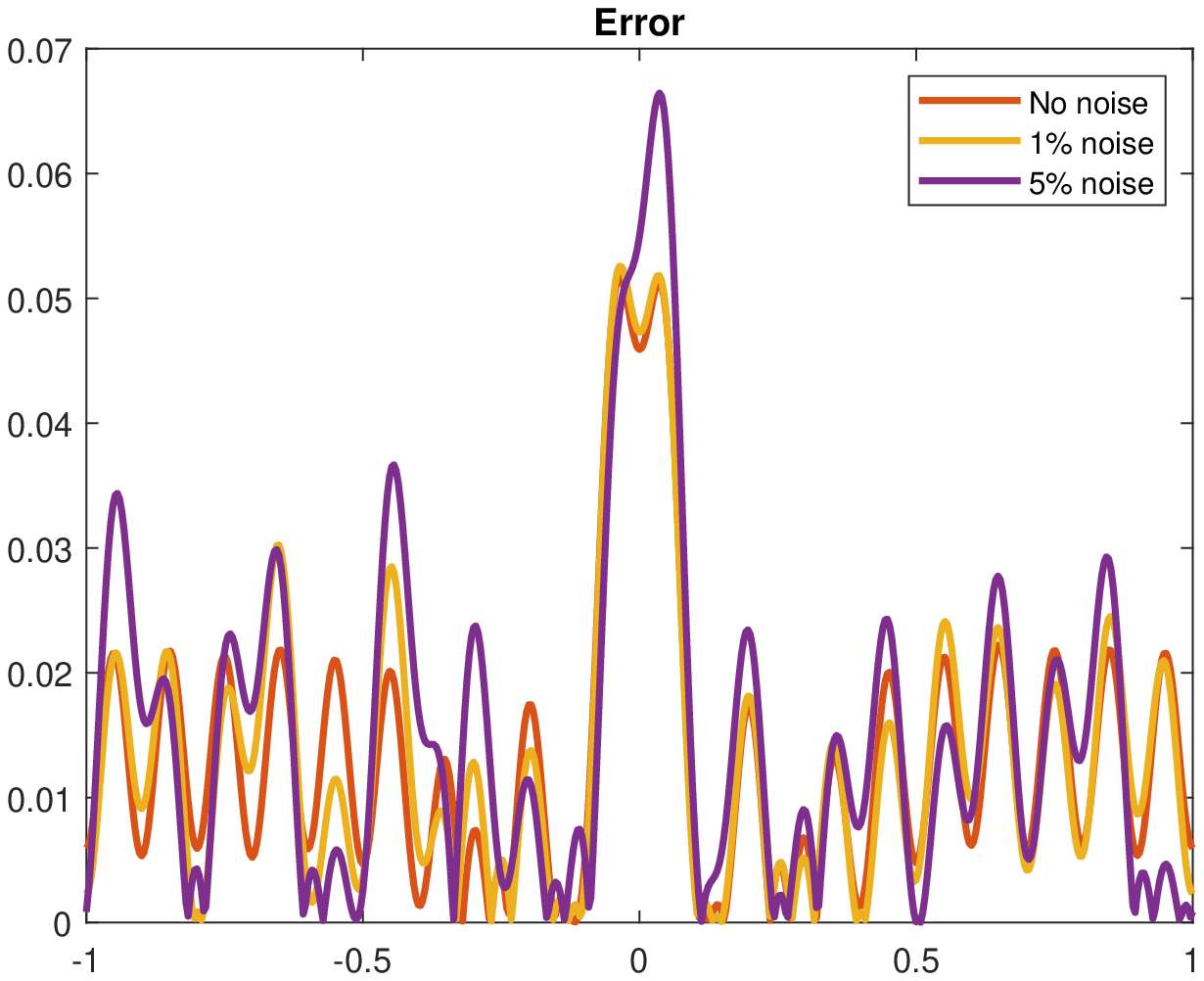}
    \caption{Left: Reconstructed $q$ with $0\%,1\%,5\%$ Gaussian noise added to $\Lambda_q-\Lambda_{q_0}$ and the ground truth. Right: The corresponding error between the reconstruction and the ground truth. 
    The relative $L^2$-errors are $19.9\%$, $20.4\%$, and $23.5\%$, respectively.
    %The relative $L^2$-errors are $19.8988\%,20.4322\%$, and $23.5400\%$, respectively
    }
    \label{fig:exp5}
% \end{figure}
\bigskip
% \begin{figure}[!htb]
%     \centering
    \includegraphics[width=0.42\textwidth]{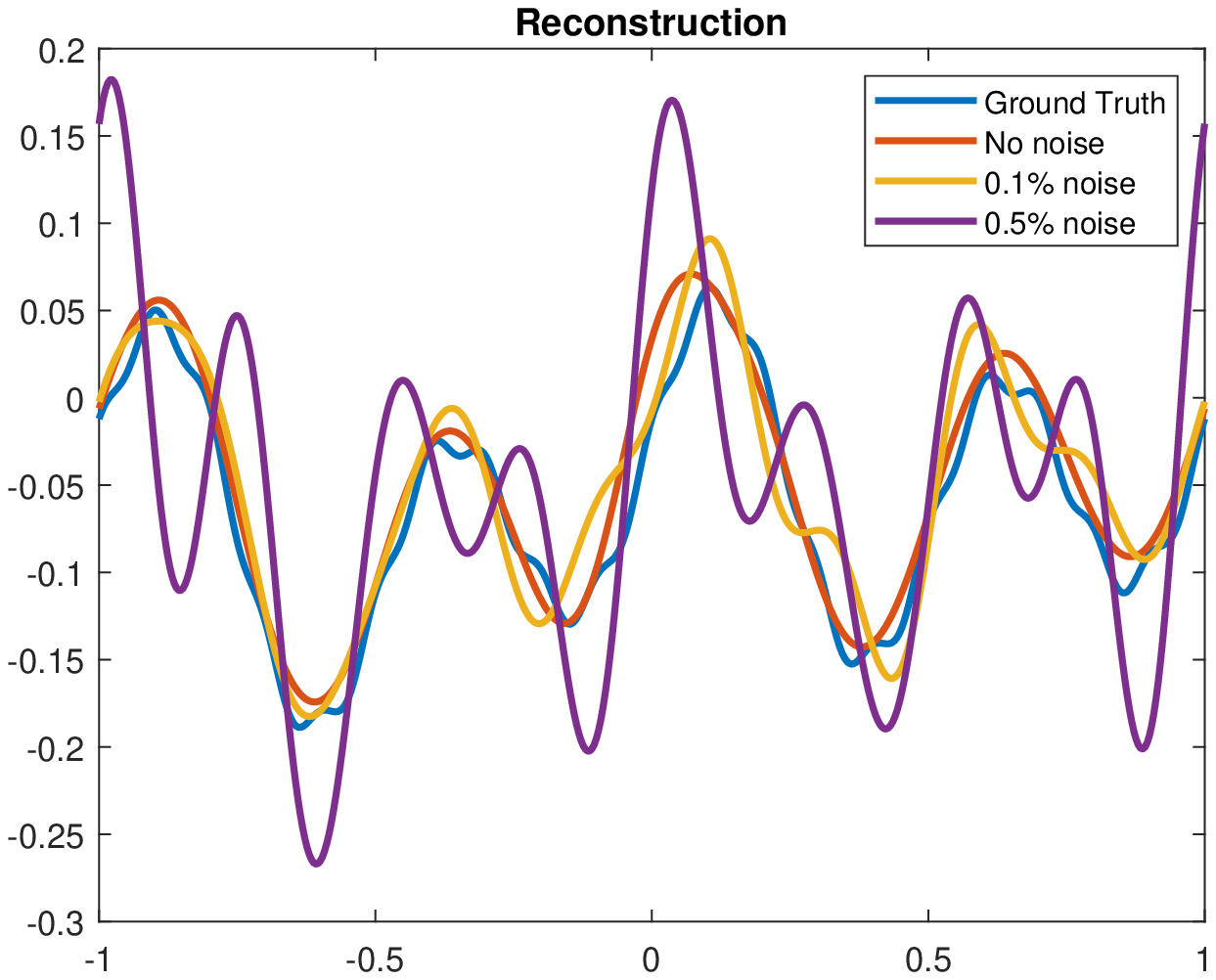}
    \includegraphics[width=0.42\textwidth]{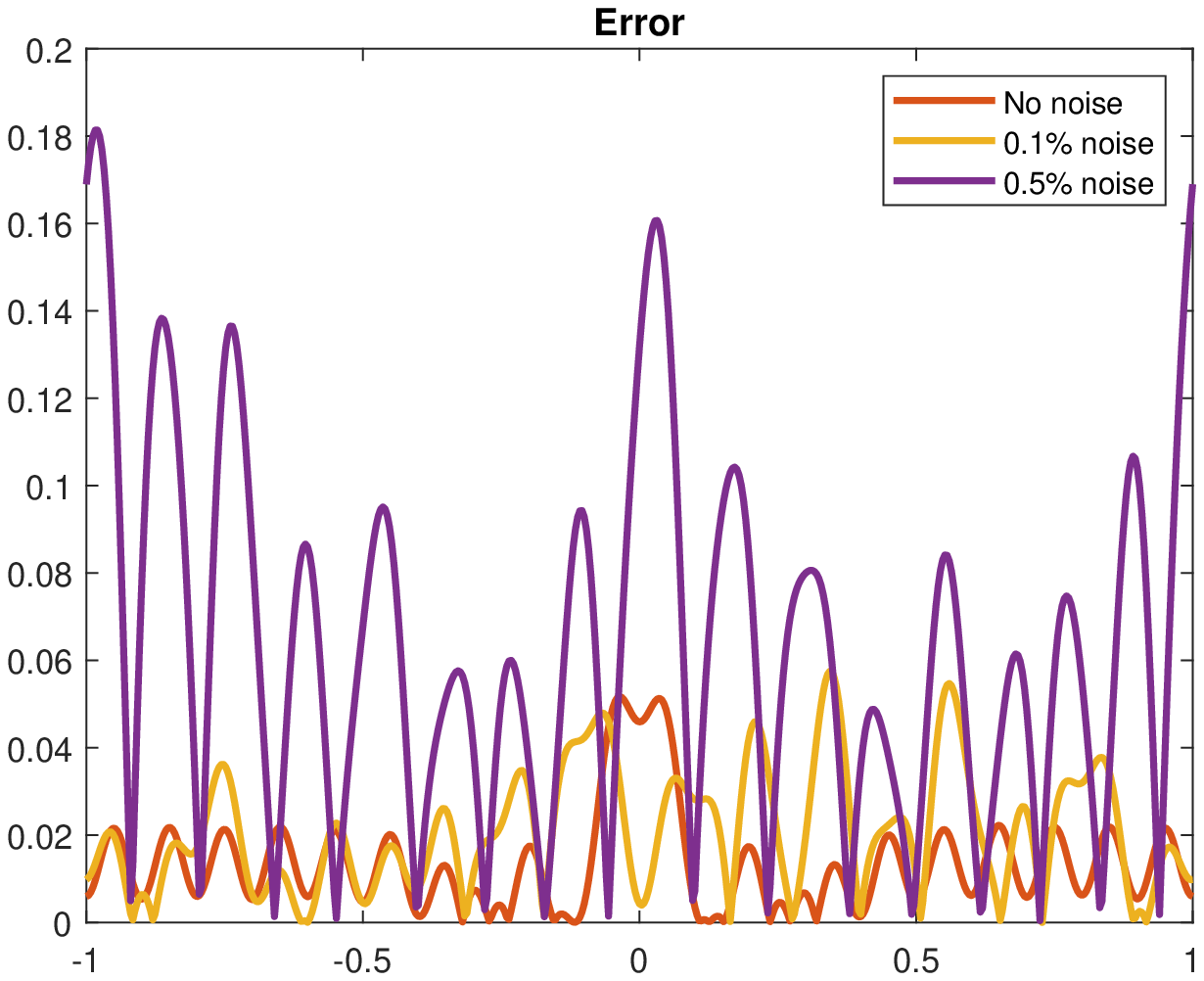}
    \caption{Left: Reconstructed $q$ with $0\%,0.1\%,0.5\%$ Gaussian noise added to $\Lambda_q$ and $\Lambda_{q_0}$ independently, and the ground truth. Right: The corresponding error between the reconstruction and the ground truth. 
    The relative $L^2$-errors are $19.9\%$, $28.1\%$, and $86.2\%$, respectively.
    %The relative $L^2$-errors are $19.8988\%,28.0616\%$, and $86.1597\%$, respectively.
    }
    \label{fig:exp6}
\end{figure}

\section*{Acknowledgment}
The research of T. Yang and Y. Yang is partially supported by the NSF grant DMS-1715178, DMS-2006881, and startup fund from Michigan State University.

%%%%%%%%%%%%%%%%%%%%%%%%%%%%%%%%%%%%%%%%%%%%%%%%%%%%%%%%%%%%%%%%%%%%%%%%%%%%%%%%%%%%%%

\appendix

\section*{Appendix}

In this appendix, we collect a few results that are used in the main text. First, we present a lemma regarding uniform continuity of min/max functions, which is used in the proof of Proposition~\ref{thm:control}.

\begin{lemma} \label{thm:unicont}
Let $X,Y$ be compact metric spaces and suppose that $f : X \times Y \to \R$ is uniformly continuous. Then the function $F: X \to \R$
    \begin{align*}
F(x) := \min \{f(x,y) : y \in Y\}
    \end{align*}
is uniformly continuous. The same is true if $\min$ is replaced by $\max$.
\end{lemma}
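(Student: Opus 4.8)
The plan is to prove the statement for the $\min$ case; the $\max$ case follows either by the identical argument or by applying the $\min$ result to $-f$, since $\max\{f(x,y):y\in Y\} = -\min\{-f(x,y):y\in Y\}$. The key structural fact to exploit is that $F$ is an infimum over a \emph{compact} index set $Y$, so the minimum is attained and we can compare the minimizers for two nearby points $x,x'$.

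First I would fix $\varepsilon>0$ and invoke uniform continuity of $f$ on the compact product $X\times Y$: there exists $\delta>0$ such that $d\big((x,y),(x',y')\big)<\delta$ implies $|f(x,y)-f(x',y')|<\varepsilon$. In particular, taking $y'=y$, we get that $d_X(x,x')<\delta$ forces $|f(x,y)-f(x',y)|<\varepsilon$ uniformly in $y\in Y$. Next I would carry out the standard two-sided comparison. Since $Y$ is compact and $f(x,\cdot)$ is continuous, pick $y_x\in Y$ attaining $F(x)=f(x,y_x)$ and $y_{x'}\in Y$ attaining $F(x')=f(x',y_{x'})$. Then
\begin{align*}
F(x') &= f(x',y_{x'}) \le f(x',y_x) < f(x,y_x) + \varepsilon = F(x) + \varepsilon,
\end{align*}
where the middle inequality uses the uniform estimate at the fixed point $y_x$. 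Symmetrically, $F(x) \le f(x,y_{x'}) < f(x',y_{x'}) + \varepsilon = F(x') + \varepsilon$. Combining the two gives $|F(x)-F(x')|<\varepsilon$ whenever $d_X(x,x')<\delta$, which is exactly uniform continuity of $F$ with the \emph{same} $\delta$ that worked for $f$.

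I do not anticipate a genuine obstacle here, as the argument is elementary; the only points requiring a little care are ensuring the minima are actually attained (guaranteed by compactness of $Y$ together with continuity of each slice $f(x,\cdot)$) and, more importantly, noting that the $\delta$ from uniform continuity of $f$ is uniform in $y$, which is precisely what lets the single $\delta$ control $F$ at every pair of nearby points. If one only had pointwise continuity of $f$ in $x$ rather than uniform continuity on the product, the choice of $\delta$ could depend on $y$ (hence on $x$), and the conclusion would degrade to mere continuity of $F$; the hypothesis of uniform continuity on $X\times Y$ is exactly what rules this out. For the $\max$ statement I would simply remark that replacing $f$ by $-f$ preserves uniform continuity and turns $\max$ into $\min$, so no separate argument is needed.
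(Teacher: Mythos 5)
Your proof is correct and is essentially the same argument as the paper's: both compare $F(x)$ and $F(x')$ by evaluating one point's minimizer at the other point and invoking uniform continuity of $f$ in the $x$-variable. The only cosmetic difference is that the paper reduces to one inequality via a WLOG ($F(x)\ge F(x')$) while you run the symmetric comparison in both directions.
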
 
\begin{proof}
Let $\epsilon > 0$ and let $x,x' \in X$.
We may assume without loss of generality that $F(x) \ge F(x')$.
Let $y,y' \in Y$ be such that $F(x)=f(x,y)$ and $F(x')=f(x',y')$, then
    \begin{align*}
|F(x) - F(x')| = f(x,y) - f(x',y') \le f(x,y') - f(x',y') 
    \end{align*}
Due to uniform continuity there is $\delta > 0$ such that 
    \begin{align*}
|f(x,y') - f(x',y')| < \epsilon
    \end{align*}
whenever $d(x,x') < \delta $. The proof when ``$\min$'' is replaced by ``$\max$'' is similar.
\end{proof}

\bigskip
Next, we construct a Helmholtz solution of the form~\eqref{eq:phi}, with $r$ satisfying the asymptotic condition~\eqref{eq:rdecay}.

\begin{lemma} \label{thm:resolest}
Let $\lambda \geq 0$. The perturbed Helmholtz equation
$(\Delta + \lambda - q_0)\phi=0$ admits solutions of the form
$$
\phi(x) = e^{i\sqrt{\lambda} \theta\cdot x} + r(x;\lambda)
$$
for any $\theta\in\mathbb{S}^{n-1}$, with $\|r\|_{H^s(\mathbb{R}^n)} = O(\lambda^\frac{s-1}{2})$ as $\lambda\rightarrow\infty$ for any $s\geq 0$.
\end{lemma}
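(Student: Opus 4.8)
The plan is to realize $r$ as the outgoing scattered field generated by the incident plane wave $u_0 := e^{i\sqrt\lambda\,\theta\cdot x}$ and to control it through the high-frequency (limiting absorption) estimates for the free resolvent. Substituting $\phi = u_0 + r$ into $(\Delta+\lambda-q_0)\phi=0$ and using $(\Delta+\lambda)u_0=0$, one sees that $r$ must solve the inhomogeneous Helmholtz equation $(\Delta+\lambda-q_0)r = q_0 u_0$ subject to the Sommerfeld radiation condition. First I would extend $q_0\in C^\infty(\overline\Omega)$ to a compactly supported function in $C_c^\infty(\R^n)$, so that both the source $q_0 u_0$ and the potential term are compactly supported; this is what makes the weighted estimates below usable.

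The analytic core is the Agmon--Kuroda limiting absorption estimate for the free outgoing resolvent $R_0(\lambda):=(\Delta+\lambda+i0)^{-1}$. Writing $\langle x\rangle = (1+|x|^2)^{1/2}$ and $L^2_\delta := \langle x\rangle^{-\delta}L^2(\R^n)$, I would record, for $\delta>1/2$ and $\lambda\ge 1$, the bounds $\norm{R_0(\lambda)}_{L^2_\delta\to L^2_{-\delta}}\le C\lambda^{-1/2}$ and, more generally, $\norm{R_0(\lambda)}_{L^2_\delta\to H^s_{-\delta}}\le C\lambda^{(s-1)/2}$ for each $s\ge 0$. The cleanest derivation is semiclassical: with $h=\lambda^{-1/2}$ one has $R_0(\lambda)=\lambda^{-1}(h^2\Delta+1+i0)^{-1}$, the semiclassical resolvent is $O(h^{-1})$ between the weighted spaces, and every spatial derivative applied to it costs one factor of $\lambda^{1/2}$; combining these gives exactly the exponent $(s-1)/2$ appearing in~\eqref{eq:rdecay}.

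With these estimates in hand I would solve the Lippmann--Schwinger equation $r = R_0(\lambda)q_0(u_0+r)$, equivalently $(I-R_0(\lambda)q_0)r = R_0(\lambda)q_0 u_0$. Because $q_0$ is compactly supported, multiplication by $q_0$ maps $L^2_{-\delta}$ boundedly into $L^2_\delta$, whence $\norm{R_0(\lambda)q_0}_{L^2_{-\delta}\to L^2_{-\delta}}\le C\lambda^{-1/2}$; for $\lambda$ large this is $<1/2$, so $I-R_0(\lambda)q_0$ is invertible by Neumann series with inverse bounded uniformly in $\lambda$. Since $\norm{q_0 u_0}_{L^2_\delta}\le \norm{q_0}_{L^2_\delta}=O(1)$ (the plane wave has modulus one), this yields $\norm{r}_{L^2_{-\delta}}\le C\lambda^{-1/2}$, the case $s=0$. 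For general $s$ I would feed the equation back into itself: $\norm{r}_{H^s_{-\delta}}\le \norm{R_0(\lambda)}_{L^2_\delta\to H^s_{-\delta}}\,\norm{q_0(u_0+r)}_{L^2_\delta}$, and bound $\norm{q_0(u_0+r)}_{L^2_\delta}\le C+C\norm{r}_{L^2_{-\delta}}\le C$, giving $\norm{r}_{H^s_{-\delta}}\le C\lambda^{(s-1)/2}$, which produces the total wave~\eqref{eq:phi}.

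The main obstacle is precisely the uniform-in-$\lambda$ resolvent estimate with the sharp powers of $\lambda$: it is the one non-elementary input, and getting the derivative count right (each derivative worth $\lambda^{1/2}$) is what pins down the exponent in~\eqref{eq:rdecay}. A secondary point requiring care is the discrepancy between the weighted spaces in which the estimates naturally live and the global norm $H^s(\R^n)$ of the statement: the outgoing field behaves like $|x|^{-(n-1)/2}$ at infinity and is not genuinely in $L^2(\R^n)$, so the estimate is most honestly read in the weighted space $H^s_{-\delta}$, which on the bounded set $\Omega$ (where the bound is actually applied, as in Theorem~\ref{thm:stab2}) reduces to the ordinary $\norm{r}_{H^s(\Omega)}\le C\lambda^{(s-1)/2}$ once the bounded weight is absorbed into the constant.
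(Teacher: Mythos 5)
Your proposal is correct, but it takes a genuinely different route from the paper. The paper works directly with the outgoing resolvent of the \emph{perturbed} operator, $R(\lambda)=(\Delta+\lambda-q_0)^{-1}$, sets $r:=\chi R(\lambda)\chi(q_0e^{i\sqrt{\lambda}\theta\cdot x})$ for a cutoff $\chi\in C_c^\infty$ equal to $1$ on $\overline\Omega$, and quotes the cutoff resolvent bound $\norm{\chi R(\lambda)\chi}_{L^2\to L^2}\le C\lambda^{-1/2}$ from Dyatlov--Zworski; the higher-order bounds are then obtained by an elliptic bootstrapping induction on even $s$ (rewriting the equation as $(\Delta-q_0)r=q_0e^{i\sqrt\lambda\theta\cdot x}-\lambda r$ and applying interior regularity, each step costing a factor $\lambda$) followed by interpolation. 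You instead build the perturbed solution from the \emph{free} resolvent via the Lippmann--Schwinger equation and a Neumann series, using the Agmon-type weighted estimates $\norm{R_0(\lambda)}_{L^2_\delta\to H^s_{-\delta}}\le C\lambda^{(s-1)/2}$; this is more self-contained (you do not black-box the perturbed resolvent bound) but requires the weighted-space limiting absorption machinery, and your higher-order estimates come directly from the derivative count on $R_0$ rather than from elliptic regularity. Both arguments are sound. The one point you flag but leave slightly unresolved is worth fixing to match the statement literally: your $r$ is the genuine outgoing wave, which decays like $|x|^{-(n-1)/2}$ and is \emph{not} in $H^s(\R^n)$ globally, so as written the conclusion $\norm{r}_{H^s(\R^n)}=O(\lambda^{(s-1)/2})$ fails for it. The remedy is exactly the paper's device: replace $r$ by $\chi r$ for a cutoff with $\chi=1$ on a neighbourhood of $\overline\Omega$; this preserves the equation inside $\Omega$ (which is all the lemma is used for), makes $r$ compactly supported, and converts your weighted bounds into global $H^s(\R^n)$ bounds with the same powers of $\lambda$.
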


\begin{proof}
If $\phi(x) = e^{i\sqrt{\lambda} \theta\cdot x} + r(x;\lambda)$ solves the perturbed Helmholtz equation and $\theta\in\mathbb{S}^{n-1}$, then $r$ has to satisfy
\begin{equation} \label{eq:scatter}
(\Delta + \lambda - q_0) r =  q_0 e^{i\sqrt{\lambda} \theta\cdot x} \quad\quad \text{ in } \Omega.
\end{equation}
Such $r$ can be constructed as follows. Let $R(\lambda):= (\Delta + \lambda - q_0)^{-1}$ be the outgoing resolvent of the perturbed Helmholtz operator. For any compactly supported smooth function $\chi\in C^\infty_c(\mathbb{R}^n)$, the following $L^2$-resolvent estimate holds~\cite[Theorem 3.1]{dyatlov2019mathematical}:
\begin{equation}
\|\chi R(\lambda) \chi\|_{L^2(\mathbb{R}^n)\rightarrow L^2(\mathbb{R}^n)} \leq \frac{C}{\sqrt{\lambda}}
\end{equation}
where $C>0$ is a constant independent of $\lambda$.
In particular, we choose $\chi\in C^\infty_c(\mathbb{R}^n)$ such that $\chi=1$ on $\overline{\Omega}$ and set $r:=\chi R(\lambda) \chi(q_0 e^{i\sqrt{\lambda} \theta\cdot x})$, then $r$ satisfies~\eqref{eq:scatter}, $r$ is compactly supported, and
\begin{equation}
 \|r\|_{L^2(\mathbb{R}^n)} \leq \frac{C}{\sqrt{\lambda}}.
\end{equation}
This proves the claim when $s=0$.

We proceed by induction. Suppose it has been proved that $r$ is compactly supported and satisfies $\|r\|_{H^s(\mathbb{R}^n)} = O(\lambda^\frac{s-1}{2})$ as $\lambda\rightarrow\infty$ for some integer $s\geq 0$. Let $\Omega_1$, $\Omega_2$ be two open sets such that $\Omega\subset \supp\chi \subset\Omega_1 \subset \overline{\Omega_1} \subset \Omega_2$. We apply the interior regularity estimate~\cite[Section 6.3, Theorem 2 ]{evans1998partial} to the equation $(\Delta - q_0) r =  q_0 e^{i\sqrt{\lambda} \theta\cdot x} -\lambda r$ to obtain
\begin{align*}
\|r\|_{H^{s+2}(\mathbb{R}^n)} = 
\|r\|_{H^{s+2}(\Omega_1)} 
& \leq C \left(
\|q_0 e^{i\sqrt{\lambda} \theta\cdot x} -\lambda r\|_{H^s(\Omega_2)} + \|r\|_{H^s(\Omega_2)} \right) \\
& \leq C \left(
1 + \lambda \|r\|_{H^s(\mathbb{R}^n)} + \|r\|_{H^s(\mathbb{R}^n)} \right) \leq C (1+\lambda^{\frac{s+1}{2}}).
\end{align*}
This completes the inductive step, hence the claim holds for all even integers $s$. The general claim is a consequence of interpolation.
\end{proof}

%%%%%%%%%%%%%%%%%%%%%%%%%%%%%%%%%%%%%%%%%%%%%%%%%%%%%%%%%%%%%%%%%%%%%%%%%%%%%%%%%%%%%%%%%%%%%%%%%%%%%%%%%%%%%%%%%%%%%%%%%%%%%%%%%%%%%%%%%%%%%%%%%%%%%%%%%%%%%%%%%%

% \newpage

% \section*{Acknowledgement}
% The authors are very grateful to Dr. Lauri Oksanen for bringing their attention to this problem, and for communicating many references. 
% The research of T. Yang and Y. Yang is partially supported by the NSF grant DMS-1715178, DMS-2006881, and the start-up fund from Michigan State University.

%%%%%%%%%%%%%%%%%%%%%%%%%%%%%%%%%%%%%%%%%%%%%%%%%%%%%%%%%%%%%%%%%%%%%%%%%%%%%%%%%%%%%%%%%%%%%%%%%%%%%%%%%%%%%%%%%%%%%%%%%%%%%%%%%%%%%%

\bibliographystyle{abbrv}
\bibliography{refs}

\end{document}